\documentclass[11pt, a4paper]{amsproc}
\usepackage{amsfonts,amsmath,amssymb, amscd}
\usepackage{txfonts}

\newtheorem{theorem}{Theorem}[section]
\newtheorem{lem}[theorem]{Lemma}
\newtheorem{lemma}[theorem]{Lemma}
\newtheorem{definition}[theorem]{Definition}
\newtheorem{prop}[theorem]{Proposition}
\newtheorem{cor}[theorem]{Corollary}

\theoremstyle{definition}

\newtheorem{example}[theorem]{Example}
\newtheorem{notation}[theorem]{Notation}

\newcommand\pf{\begin{proof}}
\newcommand\epf{\end{proof}}

\def\CX{{\mathbb C}}
\def\QX{{\mathbb Q}}
\def\OQX{{\overline{\mathbb Q} }}
\def\NX{{\mathbb N}}

\def\ZX{{\mathbb Z}}
\def\RX{{\mathbb R}}
\def\PX{{\mathbb P}}

\def\GL{{\rm GL}}

\def\gl{{\rm gl}}

\def\d{{
\partial}}

\def\calS{{\mathcal S}}

\def\calF{{\mathcal  F}}

\def\calK{{\mathcal K}}

\def\calN{{\mathcal  N}}

\def\cf{{\em cf.\ }}
\def\calD{{\mathcal D}}
\def\calC{{\mathcal C}}
\def\calF{{\mathcal F}}
\def\calO{{\mathcal O}}

\def\calOU{{\calO_{\calU}}}
\def\calOUU{{\calO_{\calU'}}}

\def\calS{{\mathcal S}}
\def\calU{{\mathcal U}}
\def\calV{{\mathcal V}}

\def\d{{
\partial}}

\def\pp{{\mathbb{P}}^1({\mathbb{C}})}

\def\Ga{{{\mathbb G}_a}}
\def\Gm{{{\mathbb G}_m}}

\begin{document}

\title[Applications of parameterized Picard-Vessiot theory]{Some applications of the \\ parameterized Picard-Vessiot theory}

\author{Claude Mitschi}
\address{Institut de Recherche Math\'ematique Avanc\'ee, Universit\'e de Strasbourg, 7 rue Ren\'e Descartes, 67084 Strasbourg Cedex, France}
\curraddr{}
\email{mitschi@math.unistra.fr}

\subjclass[2010]{Primary 34M56, 12H05, 34M55 }

\date{}


\begin{abstract}
This expository article, intended for a special volume in memory of Andrey Bolibrukh, describes some applications of the parameterized Picard-Vessiot theory. This  Galois theory for parameterized linear differential equations was  Cassidy and Singer's contribution to  an  earlier volume dedicated to  Bolibrukh.  The main results we present here were  obtained in joint work  with Michael Singer, for families of ordinary differential equations with parameterized regular singularities. They include `parametric' versions of the Schlesinger theorem and of the weak Riemann-Hilbert problem as well as an algebraic characterization  of a special type of monodromy evolving deformations, illustrated by  the classical Darboux-Halphen equation.  Some of these results were recently applied by different authors to  solve the inverse problem in parameterized Picard-Vessiot theory, and  were also generalized to irregular singularities. We  sketch some of these results by other authors. The paper includes a brief history of the Darboux-Halphen equation as well as an appendix about differentially closed fields. 
\end{abstract}

\maketitle

\tableofcontents

\section{Parameterized Picard-Vessiot theory}
The classical Picard-Vessiot theory, or differential Galois theory, PV-theory for short, associates with any linear differential system 
\begin{eqnarray}\label{eqclassPV}
\d Y &= &A Y 
\end{eqnarray}
where the entries of the square matrix $A$  belong to a differential field $k$ of characteristic zero with derivation $\d$ and  algebraically closed field of constants,   a so-called Picard-Vessiot extension of $k$. This is a differential field extension of $k$   generated by the entries of a fundamental solution,  it has no new constants and its derivation is given by (\ref{eqclassPV}). Picard-Vessiot extensions are unique up to differential $k$-isomorphisms, and their group of differential $k$-automorphims is called the Picard-Vessiot group, or differential Galois group. It is a linear algebraic group,  which reflects many properties of the equation, such as its solvability, reducibility, existence of algebraic solutions etc.

In the special volume \cite{CaSi} dedicated to Andrey Bolibrukh, Cassidy and Singer developed a {\it parameterized Picard-Vessiot  theory}, PPV-theory for short, based on seminal work by Cassidy, Kolchin and Landesman. 
In PPV-theory, the differential base-field $k$ is endowed with a set of commuting derivations $\Delta=\{\d_0, \d_1\ldots \d_m\}$. As in  
PV-theory, one wants to associate with a (square) differential system
\begin{eqnarray}\label{eqPPV}
\d_0 Y &= &A Y 
\end{eqnarray}
with coefficients in $k$, a unique {\em parameterized Picard-Vessiot extension}, that is, a $\Delta$-differential field extension of $k$ generated by the entries of a  fundamental solution of (\ref{eqPPV}) (generated as a field extension by these entries and  their $\Delta$-derivatives at any order) with no new $\d_0$-constants.  The  {\em parameterized Picard-Vessiot group} of a PPV-extension is  its group of $\Delta$-differential $k$-automorphisms, with  the usual expected properties such as  a parameterized version of  ``Galois correspondence".   The following example (\cf   \cite{CaSi} p.118) shows that some asumptions are needed to meet these requirements.

\begin{example} \label{toymodel} Consider the scalar differential equation 
\begin{eqnarray}\label{eqntype}
\frac{dy}{dx}=\frac{t}{x} y
\end{eqnarray}
For fixed $t\in\CX$ we can apply classical PV-theory  over the differential fields $\CX(x)$ or $\OQX(x)$ for instance. An easy  calculation on the  solution $x^t$ shows that the PV-group of (\ref{eqntype}) over $\CX(x)$ (resp. $\OQX(x)$) is $\CX^*$ (resp. $\OQX^*$) if $t\notin\QX$,  a  cyclic subgroup (of roots of unit)  else.

If we now consider (\ref{eqntype}) as a  parameterized family 
over the differential field $k=\CX(x,t)$ of rational functions in $x$ and $t$ with  derivations $\{\frac{d}{dx},\frac{d}{dt}\}$, its PPV-extension is
\[ K=\CX(x,t,x^t,\log x).\] 
Let us show that the corresponding PPV-group is
\[ G=\CX^*\]
and that $\log x$ is an element of $K$ invariant by $G$, whereas  the subfield $K^G$ of $K$ of elements left invariant by $G$ should be the base-field $k$ if $G$ satisfied Galois correspondence. 
Since an element  $\sigma\in G$ is determined by $\sigma(x^t)$ and $\sigma(\log x)$ and  commutes with both derivations,  it is of  the form 
\[\sigma(x^t)=a_{\sigma} x^t, \quad \sigma(\log x)=\log x + c_\sigma \]
where  $c_{\sigma}$ is the logarithmic derivative of $a_{\sigma}$, and ~$a_{\sigma}\in \CX^*$,~$c_{\sigma}\in \CX$ only depend on~$t$. An easy calculation shows  that
 \[ G=\{a\in \CX(t)^*, a'' a -a'^2=0\}=\CX^* \]
where $a'$, $a''$ are the first and second derivatives with respect to $t$, and that $G=\CX^*$ since the $a$ are rational functions of $t$, which in particular implies that $c_{\sigma}=0$ for all $\sigma\in G$, hence $\sigma(\log x)=\log x$ for all $\sigma\in G$.    
\end{example}

\medskip
To have  $K^G=k$ in Example \ref{toymodel}, the group $G$ needs to be larger, hence contain non-constant elements.  If one assumes the field $k_0=k^{\d_0}$ of $\d_0$-constants to be {\it differentially closed} (see the Appendix) then Cassidy and Singer (\cite{CaSi}, p.116) proved that for any equation (\ref{eqPPV}) there is a unique PPV-extension of $k$ and that its PPV-group is a linear {\em differential algebraic group}  defined over $k_0$, that is, a subgroup of $\GL(n,k_0)$ defined by differential polynomial equations, in other words, closed in the Kolchin topology, whose elementary closed sets are the zero sets of $\{\d_1,\ldots,\d_m\}$-differential polynomials.  For  more facts about differential algebraic groups we refer to the work of Cassidy \cite{cassidy}, who first introduced these objects, and to \cite{kolchin_groups}, \cite{Buium}. A Galois correspondence now holds between closed  differential subgroups of the PPV-group and  intermediate $\Delta$-differential extensions of~$k$  in the PPV-extension.

Note that since $k_0$ is assumed to be differentially closed, it  is in particular algebraically closed, and  usual PV-theory   holds  for  Equation (\ref{eqPPV}). The PPV-group, which is Kolchin-closed in $\GL(n,k_0)$, is  not closed in general in the (weaker)  Zariski-topology and its Zariski-closure is precisely the PV-group.  

In what follows we  only consider  families of differential equations whose coefficients are complex analytic functions,  depending analytically on complex parameters. In the parametric case we first need  to clarify the notion of  regular singular points. 

\section{Parameterized singular points}

Consider a family of linear differential equations
\begin{eqnarray}\label{PPVsystem}
{\frac{\d Y}{\d x}}&=&A(x,t) Y
\end{eqnarray}
parameterized by $t$, where $A\in\gl_n\bigl(\calOU(\{x-\alpha(t)\})\bigr)$ depends analytically on $x$ and~$t$, as explained in  the  notation below.

In what follows we will use  the words `system' or `equation' indifferently for  a matricial equation, that is, a system of equations.

\begin{notation}
$\calU\subset\CX^r$ is an open connected subset containing  $0$,  $\calO_{\calU}$ is the ring of analytic functions on $\calU$ of the multi-variable $t$,  and $\alpha\in \calOU$, with $\alpha(0)=0$ can be thought of  as a moving singularity near $0$. Let  
$\calOU((x-\alpha(t)))$ denote the ring of formal Laurent series  with coefficients in $\calOU$ 
\[  f(x,t)=\sum_{i\ge m} a_i(t)(x-\alpha(t))^i\]
where $m\in \ZX$ is independent of $t$, 
 and let $\calOU(\{x-\alpha(t) \})$  denote the ring  of  those $f(x,t)\in\calOU((x-\alpha(t)))$  that,   for  each fixed  $t\in \calU$, converge  for $0<|x-\alpha(t)|<R_t$, for some $R_t>0$.

\smallskip
\noindent Note that in  a compact neighbourhood  $\calN\subset\calU$ of $0$, one can choose $R_t$ to be independent of $t$, for $t\in\calN$.

\end{notation}

With these asumptions and notation, we can expand the matrix $A$  in (\ref{PPVsystem}) as
\[
A(x,t)=\frac{A_{-m}(t)}{(x-\alpha(t))^{m}}+\frac{A_{-m+1}(t)}{(x-\alpha(t))^{m-1}}+\ldots=\sum_{i\ge{-m}} {(x-\alpha(t))^{i}}{A_i(t)}
\]
where $A_i(t)\in\gl_n\bigl(\calOU\bigr)$ for all $i\ge{-m}$, and $m\in\NX$ does not depend on~$t$.

\begin{definition}\label{equiv}

Two parametric equations
\[ {\frac{\d Y}{\d x}}=AY    \quad{\mathrm{and}}\quad    {\frac{\d Y}{\d x}}=BY, \]
with $A,B\in\gl_n\bigl(\calOU(\{x-\alpha(t)\})\bigr)$ are {\em equivalent} if for some $P\in \GL_n\bigl(\calOU(\{x-\alpha(t)\})\bigr)$ 
\[B={\frac{\d P}{\d x}}P^{-1}+PAP^{-1}. \]
 
\end{definition}

\begin{definition}\label{regsing} With notation as before,

\begin{enumerate}
\item Equation (\ref{PPVsystem}) has {\em simple singular points  near $0$} if $m=1$ and $A_{-1}\ne 0$ as an element of $\gl_n\bigl(\calOU\bigr)$,

\item Equation (\ref{PPVsystem}) has {\em parameterized regular singular points near $0$} (notation $prs_0$) if it is equivalent to an equation with simple singular points near $0$.
\end{enumerate}
\end{definition}

\begin{example} Let 
\begin{eqnarray*}
A & = &\left(\begin{array}{cc} 0 & -3\\ 0& 0 \end{array}\right) \frac{1}{ (x-t)^2} + \left(\begin{array}{cc} t & 0\\ 0& t-2 \end{array}\right) \frac{1}{ x-t}\\
B & = & \left(\begin{array}{cc} t-1&0\\ 0& t-1 \end{array}\right) \frac{1}{x-t}
\end{eqnarray*}

These equations are equivalent  via  
\begin{eqnarray*}
P & = & \left(\begin{array}{cc} \frac{1}{x-t}&\frac{-1}{(x-t)^2}\\ 0& x-t \end{array}\right) 
\end{eqnarray*}
and since the latter  has  simple singular points near $0$, the first equation  has parameterized regular singular points   near $0$.
\end{example}

In analogy to the non-parameterized case, solutions of an equation (\ref{PPVsystem}) with parameterized regular singularities near $0$ have  ``uniformly"  a  moderate growth as $x$ gets near $\alpha(t)$ and $t$ tends to $0$ (\cf \cite{MS0}, Cor. 2.6).

\begin{prop}\label{modgrowth} Assume that Equation (\ref{PPVsystem}) has regular singular points near $0$. Then there is an open  connected subset $\calU'$  of $\calU$ such that
\begin{itemize}
\item[1)]  Equation (\ref{PPVsystem}) has a solution $Y$ of the form
\begin{eqnarray}\label{sol}
Y(x,t)&=&\Bigl( \sum_{i\ge i_0}(x-\alpha(t))^i Q_i(t) \Bigr) (x-\alpha(t))^{{\tilde{A}}(t)}
\end{eqnarray}
with  ${\tilde{A}}\in\gl_n(\calOUU)$ and $Q_i\in\gl_n(\calOUU)$ for all $i\ge i_0$,

\smallskip
\item[2)] for any $r$-tuple $(m_1,\ldots,m_r)$ of non-negative integers there is an integer $N$ such that for any fixed $t\in{\mathcal U}'$ and  any sector ${\mathcal S}_t$ from $\alpha(t)$ in the complex plane, of opening less than $2\pi$, 
\[Ê\lim_{x\rightarrow\alpha(t)\atop x\in {\mathcal S}_t} \bigl(x-\alpha(t)\bigr)^N \frac{\d ^{m_1+\ldots+m_r}Y(x,t)}{\d^{m_1}t_1\ldots\d^{m_r}t_r}=0.\] 

  \end{itemize}

\end{prop}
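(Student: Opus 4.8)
The plan is to localise at each moving singular point, invoke the classical Frobenius construction for a regular singular point, and then track uniformity in the parameter $t$; the price for uniformity will be the passage from $\calU$ to a smaller connected open neighbourhood $\calU'$ of $0$. By Definition~\ref{regsing}(2) there is $P\in\GL_n\bigl(\calOU(\{x-\alpha(t)\})\bigr)$ for which $B=(\d P/\d x)P^{-1}+PAP^{-1}$ has simple singular points near $0$, and if $Z$ solves $\d Z/\d x=BZ$ then $Y=P^{-1}Z$ solves \eqref{PPVsystem}. Since $P$ and $P^{-1}$ are convergent Laurent series in $x-\alpha(t)$ with coefficients in $\calOU$, and differentiating such a series in $t$ only raises the order of its pole in $x-\alpha(t)$ by the order of the derivative, multiplying by $P^{-1}$ carries a solution of shape \eqref{sol} to another one (with a possibly more negative $i_0$) and preserves the conclusion of part~2). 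So we may assume that \eqref{PPVsystem} itself has simple singular points near $0$. Setting $u=x-\alpha(t)$ we then have $\d/\d x=\d/\d u$ and $\d/\d t_j=\d/\d t_j-(\d\alpha/\d t_j)\,\d/\d u$, and the system reads $u\,\d Y/\d u=B(u,t)Y$ with $B(u,t)=\sum_{i\ge0}u^iB_i(t)$, $B_0=A_{-1}\ne0$, each $B_i\in\gl_n(\calOU)$, and — by the remark after the Notation — a common radius of convergence in $u$ and uniform coefficient bounds over any compact $\calN\subset\calU$.

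\emph{The formal solution, and the main obstacle.} Looking for $Y=\hat P(u,t)\,u^{\tilde A(t)}$ with $\hat P=\sum_{i\ge0}u^iQ_i$ and $Q_0=I$, substitution and comparison of powers of $u$ give $\tilde A=B_0=A_{-1}$ and, for $i\ge1$, the relation $iQ_i-[A_{-1}(t),Q_i]=\sum_{j=0}^{i-1}B_{i-j}(t)Q_j$. The linear operator $L_{i,t}\colon Q\mapsto iQ-[A_{-1}(t),Q]$ is invertible exactly when $i$ is not a difference of two eigenvalues of $A_{-1}(t)$. When $A_{-1}(0)$ has no two eigenvalues differing by a positive integer this holds for every $i\ge1$ and every $t$ in a small ball, which we take as $\calU'$, and $Q_i(t)=L_{i,t}^{-1}\bigl(\sum_{j<i}B_{i-j}Q_j\bigr)$ is analytic on $\calU'$. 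In the resonant case one first applies a finite sequence of shearing transformations to replace $A_{-1}$ by a matrix whose eigenvalue differences avoid $\ZX_{>0}$; making these shears depend analytically on $t$ uses the analyticity of the eigenprojections of $A_{-1}(t)$, which one arranges by passing to a connected open $\calU'\subset\calU$ containing $0$ on which the eigenvalue configuration of $A_{-1}(t)$ is stable (removing from $\calU$ the analytic set where it degenerates leaves a connected open neighbourhood of $0$). This shrinking to $\calU'$, together with the accompanying control of the exponent matrix $\tilde A$, is the only genuinely delicate point; everything after it is a matter of estimates.

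\emph{Convergence and part~1).} On a compact $\calN\subset\calU'$ one has $\|B_i(t)\|\le M\rho^{-i}$ for suitable $M,\rho>0$, and $\|L_{i,t}^{-1}\|=O(1/i)$ as $i\to\infty$ uniformly in $t\in\calN$ (and $\|L_{i,t}^{-1}\|$ is bounded on $\calN$ for the finitely many small $i$). A standard majorant-series argument then shows that $\sum_{i}u^iQ_i(t)$ converges for $|u|<\rho'$ with some $\rho'>0$ independent of $t\in\calN$, with $Q_i\in\gl_n(\calOUU)$. Together with $\tilde A=A_{-1}\in\gl_n(\calOUU)$ this gives part~1), with $i_0=0$ (or the value inherited from $P^{-1}$ in the reduction step).

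\emph{Moderate growth, part~2).} Write $u^{\tilde A(t)}=\exp\bigl(\tilde A(t)\log u\bigr)$ with a fixed branch of $\log u$ on a sector $\calS_t$ from $\alpha(t)$ of opening $<2\pi$. Differentiating \eqref{sol}, each $\d/\d t_j$ either hits a $Q_i$ or $\tilde A$ (producing analytic functions bounded on $\calN$, with no worse singularity in $u$), or hits $\log u$ (producing $\d_{t_j}\log u=-(\d_{t_j}\alpha)/u$, i.e.\ one extra factor $1/u$), or hits the exponential (producing, by Duhamel's formula, further powers of $\log u$ and of $\d_{t_j}\tilde A$). Iterating $m_1+\cdots+m_r$ times, the derivative in part~2) is a finite sum of terms $u^{-k}(\log u)^{\ell}R(u,t)\,u^{\tilde A(t)}$ with $k$ and $\ell$ bounded in terms of $m_1+\cdots+m_r$ (and $i_0$), and $R$ analytic and bounded on $\calN\times\{|u|\le\rho'\}$. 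On $\calS_t$ one has $|\log u|\le\log(1/|u|)+O(1)$, so $|\log u|^{\ell}=O(|u|^{-\varepsilon})$ for every $\varepsilon>0$, and $\|u^{\tilde A(t)}\|\le C|u|^{-\sigma}$ with $\sigma=\sup_{t\in\calN}\|\tilde A(t)\|<\infty$. Hence, taking $N$ larger than $k+\sigma+1$ — uniformly possible over $t\in\calN$ — each such term multiplied by $(x-\alpha(t))^{N}$ tends to $0$ as $x\to\alpha(t)$ in $\calS_t$, which is part~2).
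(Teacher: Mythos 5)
The paper does not actually prove Proposition \ref{modgrowth}: it quotes it from \cite{MS0} (Cor.\ 2.6). Your route --- gauge back to a simple pole via $P$, run the Frobenius recursion $iQ_i-[A_{-1}(t),Q_i]=\sum_{j<i}B_{i-j}Q_j$ with $\tilde A=A_{-1}$, and make every estimate uniform on compact subsets of the parameter space --- is exactly the standard parametric version of the classical argument, and in spirit the one the cited source carries out. The reduction through $P^{-1}$, the invertibility and $O(1/i)$ bound for $L_{i,t}$, the majorant argument for convergence, and the sectorial estimates in part 2) (the extra factor $1/u$ from $\d_{t_j}\log u$, Duhamel for $\d_{t_j}\exp(\tilde A\log u)$, and $\|u^{\tilde A(t)}\|\le C|u|^{-\sigma}$) are all correct.

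Two points in what you yourself call the delicate step need repair. First, your assertion that deleting from $\calU$ the locus where the eigenvalue configuration of $A_{-1}(t)$ degenerates ``leaves a connected open neighbourhood of $0$'' is false whenever $0$ itself lies on that locus (eigenvalues of $A_{-1}(t)$ may collide precisely at $t=0$); the complement of a proper analytic subset of a connected open $\calU\subset\CX^r$ is open and connected, but it need not contain $0$. This does not sink the proof, because the proposition only asks for \emph{some} open connected $\calU'\subset\calU$, with no requirement that $0\in\calU'$ --- but you should delete ``containing $0$'' and say explicitly that you are using this freedom. Second, the resonant case is only named, not executed. On $\calU'$ one must (i) choose branches $\lambda_1(t),\dots,\lambda_n(t)$ of the eigenvalues that are analytic (possible after the shrinking just described), (ii) dispose not only of identically resonant pairs by shearing but also of the proper analytic subsets $\{t:\lambda_i(t)-\lambda_j(t)=k\}$ for the finitely many relevant $k\in\ZX_{>0}$ (finitely many since $\lambda_i-\lambda_j$ is bounded on a relatively compact $\calU'$), either by further shrinking or by shearing until all differences have real part in $(-1,1)$, and (iii) observe that the shearing matrices, built from the analytic eigenprojections, lie in $\GL_n\bigl(\calOUU(\{x-\alpha(t)\})\bigr)$, so that undoing them preserves the shape (\ref{sol}) (shifting $i_0$) and the moderate-growth estimate, exactly as in your reduction via $P$. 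With these amendments the argument is complete.
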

Solutions of parameterized differential equations with {\em irregular} singularities have been studied by Babbitt and Varadarajan  in  \cite{babbitt_varadarajan},  by Sch\" afke in  \cite{schaefke}, and more recently by Dreyfus in \cite{TD}. Assuming $0$ is a (non-moving)  irregular singularity, these authors gave a  condition on the exponential part of a formal solution in its usual form
\[ {\hat{Y}}(z)={\hat{H}}(z)z^Je^Q\]
ensuring that the coefficients of the formal series ${\hat{H}}(z)$ depend analytically on the multi-parameter.

\section{PPV-theory  and monodromy}
 From the beginning of Picard-Vessiot theory in the nineteenth century, monodromy has been closely related to the `group of transformations' of linear differential equations, now called the Picard-Vessiot group. More information about the history of  the monodromy group and the Picard-Vessiot group can be found  in \cite{Borel2} and \cite{Zola}. 

\subsection{Classical Picard-Vessiot theory and monodromy}

In classical PV-theory it is commonly admitted that the ``monodromy matrices belong to the differential Galois group", which is in particular true for a differential equation (\ref{eqclassPV}) over the base-field $\CX(x)$, but which does not hold over ${\overline{\QX}}(x)$ though. Moreover, if (\ref{eqclassPV})  has regular singular points only, Schlesinger's theorem (\cf \cite{schlesinger}, \S\ 159,160, \cite{PuSi2003} Th.5.8) tells us that the monodromy matrices generate a Zariski-dense subgroup of the differential Galois group over $\CX(x)$. For instance,  in Example \ref{toymodel} above: 
\[
\frac{dy}{dx}=\frac{t}{x} y
\]
let $t$ denote a constant non-zero complex number. This equation has two regular singular points, at $0$ and $\infty$. With respect to the solution $x^t$ (for a given determination of $\log x$) the monodromy `matrices' with respect to $0$ and $\infty$ are the scalars $m_0=e^{2\pi i t}$ and $m_{\infty}=e^{-2\pi i t}$.   It is easy to see that the Zariski closure in $\CX^*$ of the subgroup generated by $m_0$ (or $m_{\infty}$) is the PV-group over $\CX$ given above ($\CX^*$ or a finite cyclic group). 
  
If   $t\in\OQX$,  what happens over the differential field $\OQX(x)$ ? The monodromy scalars $e^{\pm 2\pi i t}$ may be transcendental in this case and hence not belong to the PV-group, which  is  a subgroup of  $\OQX^*$.  But the results given earlier show that the PV-group is defined by the same equation in $\CX^*$ or $\OQX^*$ respectively, whether we consider $t\in\CX^*$ or  $t\in \OQX^*$.  The following example too illustrates the importance of the base field. 

\begin{example}\label{pbmon}
\begin{eqnarray*}
\frac{dY}{dx} & = &\left(\begin{array}{cc} 1/x & 1\\ 0& 0 \end{array}\right) Y.\end{eqnarray*}
This equation has two regular singular points, one Fuchsian  at $0$, one at $\infty$. With respect to the fundamental solution
\begin{eqnarray*}
\left(\begin{array}{cc} x & x\log x\\ 0& 1 \end{array}\right) 
\end{eqnarray*}
the monodromy matrix at $0$ is
\begin{eqnarray*}
M&=&\left(\begin{array}{cc} 1 & 2\pi i\\ 0& 1 \end{array}\right). 
\end{eqnarray*} 
If we consider the equation over $\OQX(x)$, clearly  $M$ does not belong to the PV-group over $\OQX(x)$ since it has a transcendental entry.
\end{example}

To adjust Schlesinger's result to this situation we use the following result (\cf \cite{MS0}, Prop. 3.1 and Cor. 3.2)

\begin{prop}\label{letprop1} 
Let $C_0\subset C_1$ be algebraically closed fields and $k_0 = C_0(x), k_1 = C_1(x)$ be differential fields where $c' = 0$ for all $c \in C_1$ and $x'=1$.
Let 
\begin{eqnarray}\label{leteqn0}Y' &=& AY\end{eqnarray}
be a differential equation with $A
 \in \gl_n(k_0)$. If $G(C_0)\subset\GL_n(C_0)$ is the PV-group over $k_0$ of Equation (\ref{leteqn0}) with respect to some fundamental solution,  where $G$ is a linear algebraic group defined over $C_0,$  then $G(C_1)$ is the PV-group of (\ref{leteqn0}) over $k_1$,  with respect to some fundamental solution. 
\end{prop}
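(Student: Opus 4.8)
The plan is to show that $R_1:=k_1\otimes_{k_0}R_0$, where $R_0\subset L_0$ denotes the PV-ring of (\ref{leteqn0}) over $k_0$, is again the PV-ring over $k_1$, and then to read off its Galois group from the base change of the associated torsor. Fix a fundamental matrix $U\in\GL_n(R_0)$, so that $R_0=k_0[U,\det(U)^{-1}]$ and $R_1=k_1[U,\det(U)^{-1}]$ is generated over $k_1$ by the entries of the same $U$, which is still a fundamental solution of (\ref{leteqn0}) in $\GL_n(R_1)$. Since $k_1=C_1(x)$ arises from $k_0=C_0(x)$ by enlarging the (algebraically closed) field of constants, two facts suffice: (i) $R_1$ is a simple differential ring --- whence, by the standard lemma on finitely generated simple differential algebras over a field with algebraically closed constants, $R_1$ is a domain with field of constants $C_1$, so $R_1$ is a PV-ring over $k_1$ and $L_1:=\mathrm{Frac}(R_1)$ the corresponding PV-extension; and (ii) the PV-group of $R_1$ over $k_1$ with respect to $U$ equals $G(C_1)$.

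The hard part is (i), and I would argue it as follows. Choose a $C_0$-basis $(e_j)_{j\in J}$ of $C_1$ with $e_{j_0}=1$. As $k_1=\mathrm{Frac}(C_1\otimes_{C_0}k_0)$ and $R_0$ is a $k_0$-vector space, $C_1\otimes_{C_0}R_0=\bigoplus_{j\in J}e_j\,R_0$ and $R_1$ is the localization of this ring at the nonzero elements of $C_1\otimes_{C_0}k_0$; its derivation sends $\sum_j e_j r_j$ to $\sum_j e_j r_j'$ because $e_j'=0$. Let $I\subseteq R_1$ be a nonzero differential ideal. After multiplying a nonzero element of $I$ by a suitable nonzero element of $C_1\otimes_{C_0}k_0\subseteq k_1^{\times}$ (a unit of $R_1$), we may choose $0\ne g\in I\cap(C_1\otimes_{C_0}R_0)$ with support $S\subset J$ as small as possible, $g=\sum_{j\in S}e_j g_j$ and all $g_j\in R_0\setminus\{0\}$. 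For any $j_1\in S$ the element $g_{j_1}'\,g-g_{j_1}\,g'$ lies in $I$ and is supported on $S\setminus\{j_1\}$, hence vanishes by minimality; therefore $(g_j/g_{j_1})'=0$ in $L_0=\mathrm{Frac}(R_0)$ for all $j\in S$. But $L_0$, being a PV-extension of $k_0$, has field of constants $C_0$, so $g_j=\lambda_j g_{j_1}$ with $\lambda_j\in C_0$, and $g=e\,g_{j_1}$ with $e:=\sum_j\lambda_j e_j\in C_1^{\times}$. Thus $g_{j_1}=e^{-1}g\in I\cap R_0$, and since $I$ is closed under the derivation so is $I\cap R_0$; hence $I\cap R_0$ is a nonzero differential ideal of the simple differential ring $R_0$, which forces $1\in I$. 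So $R_1$ is simple. This is exactly where the hypotheses are used --- that $L_0$ has no new constants and that $R_0$ is differentially simple --- and I expect it to be the only real obstacle; step (ii) below is formal.

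For (ii), write $R_0=k_0[Z,\det(Z)^{-1}]/\mathfrak q_0$ with $Z\mapsto U$, so that $G\subseteq\GL_{n,C_0}$ represents the stabilizer of $\mathfrak q_0$ under the right translations $Z\mapsto ZM$; equivalently $\mathrm{Spec}\,R_0$ is a $G_{C_0}$-torsor over $k_0$, encoded in a $G$-equivariant isomorphism of $R_0$-algebras $R_0\otimes_{k_0}R_0\simeq R_0\otimes_{C_0}C_0[G]$. Applying $k_1\otimes_{k_0}(-)$ and using that $k_1$ is flat over $k_0$, the ideal $\mathfrak q_0$ becomes $\mathfrak q_1:=\mathfrak q_0\,k_1[Z,\det(Z)^{-1}]$ with $R_1=k_1[Z,\det(Z)^{-1}]/\mathfrak q_1$, and the torsor isomorphism becomes $R_1\otimes_{k_1}R_1\simeq R_1\otimes_{C_1}\bigl(C_1\otimes_{C_0}C_0[G]\bigr)$, the right-hand factor being the coordinate ring of $G_{C_1}=G\times_{C_0}C_1$; equivalently, the stabilizer of $\mathfrak q_1$ under $Z\mapsto ZM$ is cut out, over $C_1$-algebras, by the same equations that cut out $G$ over $C_0$-algebras. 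Hence $\mathrm{Spec}\,R_1$ is a $G_{C_1}$-torsor over $k_1$, so the PV-group of (\ref{leteqn0}) over $k_1$ with respect to $U$ is $G_{C_1}(C_1)=G(C_1)$, as claimed.
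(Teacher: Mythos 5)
Your argument is correct. The paper gives no proof of this proposition (it refers to \cite{MS0}, Prop.~3.1 and Cor.~3.2), and your two steps --- differential simplicity of $k_1\otimes_{k_0}R_0$ via the minimal-support argument combined with the absence of new constants in $\mathrm{Frac}(R_0)$, followed by base change of the torsor isomorphism to identify the group over $k_1$ as $G(C_1)$ --- constitute precisely the standard descent-of-constants argument used in that reference.
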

For instance, on  Example \ref{pbmon}, we  easily see that the PV-group over $\OQX(x)$ is 
\begin{eqnarray*}
G=&\left\{\left(\begin{array}{cc} 1 & \lambda\\ 0& 1 \end{array}\right), \lambda\in\OQX \right\} 
\end{eqnarray*} 
and the PV-group over $\CX(x)$ is the  group of $\CX$-points of $G$
\begin{eqnarray*}
G(\CX)=&\left\{\left(\begin{array}{cc} 1 & \lambda\\ 0& 1 \end{array}\right), \lambda\in\CX \right\}. 
\end{eqnarray*}.

The monodromy matrices do belong to the PV-group, after extending scalars. 
\begin{cor}\label{cstmonPV}
 Assume  in Equation (\ref{leteqn0}) that $A \in \gl_n(C_0(x))$ where $C_0$ is some algebraically closed subfield of $\CX$. Assuming  $0$ is a non-singular point, let us  fix it as the base-point of $\pi_1(\PX^1(\CX)\backslash \calS)$, where  $\calS$ is the set of singular points of (\ref{leteqn0}) on  $\PX^1(\CX)$. Let $G(C_0)$ be the PV-group of (\ref{leteqn0}) over $C_0(x)$, where $G$ is a linear algebraic group defined over $C_0$. If $C_1$  is any algebraically closed subfield of $\CX$  containing $C_0$ and the entries of the monodromy matrices, then the monodromy matrices are elements of the PV-group $G(C_1)$ of  (\ref{leteqn0}) over $C_1(x)$.
\end{cor}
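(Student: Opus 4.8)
The plan is to reduce the statement to two facts that are already available: the classical fact, recalled above, that over the base field $\CX(x)$ the monodromy matrices belong to the differential Galois group, and Proposition \ref{letprop1} describing how the Picard--Vessiot group behaves under an algebraically closed constant extension. I keep the non-singular base point $0$ of the statement fixed, let $\calF_0$ denote the fundamental solution of (\ref{leteqn0}) that is holomorphic near $0$ and normalized by $\calF_0(0)=\mathrm{Id}$, and compute both the group $G$ and the monodromy matrices $M_\gamma$, for $\gamma\in\pi_1(\PX^1(\CX)\backslash\calS,0)$, with respect to this same $\calF_0$; replacing $\calF_0$ by another fundamental solution simply conjugates $G$ and all the $M_\gamma$ simultaneously by an element of $\GL_n(C_0)$, so this normalization is harmless and is the one that makes the statement precise.

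First I would check that $\calF_0$ realizes the Picard--Vessiot extension over each of $C_0(x)$, $C_1(x)$ and $\CX(x)$, compatibly under constant extension. Expanding $A=\sum_{j\ge 0}A_j x^j$ near the non-singular point $0$ with $A_j\in\gl_n(C_0)$, the recursion coming from $\calF_0'=A\calF_0$ together with $\calF_0(0)=\mathrm{Id}$ forces, by induction and division by positive integers, every Taylor coefficient of $\calF_0$ to lie in $\gl_n(C_0)$. Hence $C_0(x)(\calF_0)\subseteq C_0((x))$, and likewise $C_1(x)(\calF_0)\subseteq C_1((x))$ and $\CX(x)(\calF_0)\subseteq\CX((x))$; since the field of constants of $L((x))$, for its standard derivation, is $L$ whenever $L$ has characteristic zero, each of these is a Picard--Vessiot extension of the corresponding base field with the same $\calF_0$ as fundamental solution, and the extensions over $C_1(x)$ and over $\CX(x)$ are deduced from the one over $C_0(x)$ by the respective constant extensions. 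By Proposition \ref{letprop1}, applied both to $C_0\subseteq C_1$ and to $C_0\subseteq\CX$, and since the chosen fundamental solution survives this base change, the Picard--Vessiot group of (\ref{leteqn0}) computed with respect to $\calF_0$ is $G(C_1)$ over $C_1(x)$ and $G(\CX)$ over $\CX(x)$.

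Next I would run the monodromy argument over $\CX(x)$. Since the entries of $A$ are single-valued on $\PX^1(\CX)\backslash\calS$, analytic continuation of $\calF_0$ along a loop $\gamma$ fixes $\CX(x)$ pointwise, commutes with the derivation, and sends $\calF_0$ to $\calF_0 M_\gamma$, where $M_\gamma\in\GL_n(\CX)$ is constant (this is the definition of the monodromy matrix of $\gamma$). Thus continuation along $\gamma$ is a differential $\CX(x)$-automorphism of $\CX(x)(\calF_0)$ whose matrix relative to $\calF_0$ is $M_\gamma$; by the first step the group of all such automorphisms is $G(\CX)$, so $M_\gamma\in G(\CX)$ for every $\gamma$. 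By hypothesis every entry of every $M_\gamma$ lies in $C_1$, hence $M_\gamma\in G(\CX)\cap\GL_n(C_1)$; and since $G$ is a linear algebraic group defined over $C_0\subseteq C_1$, this intersection is exactly $G(C_1)$. Together with the first step, this shows that the monodromy matrices are elements of the Picard--Vessiot group $G(C_1)$ of (\ref{leteqn0}) over $C_1(x)$.

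I expect the only point requiring genuine care to be the bookkeeping of the fundamental solution: one must be sure that the three Picard--Vessiot groups above are literally the same $G$ evaluated at the three fields, all read off with respect to $\calF_0$, so that ``$M_\gamma\in G(\CX)$'' is a statement about the given $G$ rather than about an unrelated conjugate. The Taylor-coefficient computation of the second paragraph --- which uses both the non-singularity of the base point and the inclusion $\QX\subseteq C_0$ --- is exactly what pins this down. Once it is in place, everything else is a direct application of Proposition \ref{letprop1} and of the classical statement that monodromy lies in the differential Galois group over $\CX(x)$.
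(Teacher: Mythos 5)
Your proof is correct and follows essentially the same route the paper intends for this corollary: the classical fact that the monodromy matrices lie in the Picard--Vessiot group over $\CX(x)$, combined with Proposition \ref{letprop1} on constant-field extension, with the Taylor-coefficient normalization at the non-singular base point supplying the bookkeeping that keeps the same $G$ and the same fundamental solution throughout. Nothing further is needed.
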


\medskip
\subsection{Monodromy matrices in the PPV-group}
In PPV-theory too,  the equation may have coefficients  in some differentially closed field and the entries of the parameterized monodromy matrices  not belong to this field.

In \cite{MS0} we proved  a result similar to Proposition~\ref{letprop1} for parameterized Picard-Vessiot extensions. 
Consider equations of the form
\begin{eqnarray} \label{leteqn1} \d_x Y &=& A(x,t)Y\end{eqnarray}
where $A(x,t)\in\gl_n(\calOU(x))$ and $t=(t_1,\ldots,t_r)\in \calU$ for  some domain $\calU\subset\CX^r$. Denoting  differentiation with respect to $x, t_1,\ldots,t_r$ by $\d_x, \d_{t_1},\ldots,\d_{t_r}$ respectively,  let  $\Delta=\{\d_x, \d_{t_1},\ldots,\d_{t_r}\}$  and $\Delta_t
=\{\d_{t_1},\ldots,\d_{t_r}\}$. 

Let $C$ be a $\Delta_t$-differentially closed  extension
 of some field of functions that are  analytic  on some domain of~$\CX^r$ and let $\d_{t_i}$ denote for each $i$  the derivation extending $\d_{t_i}$. We consider the  $\Delta$-differential field structure on $k =C(x)$ given by $\d_x(x) = 1, \d_{t_i}(x) = 0$ for each $i$ and $\d_x(c) = 0$ for all $c\in C$, and we  assume  that $A\in gl_n(k)$.

\begin{prop}\label{letprop2}Let $C_0\subset C_1$ be differentially closed $\Delta_t$-fields as $C$ above, inducing a $\Delta$-field structure on $k_0 = C_0(x)$ and  $k_1 = C_1(x)$. Let 
\begin{eqnarray}\label{leteqn2}\d_x Y&=& AY\end{eqnarray}
be a differential equation with $A \in \gl_n(k_0)$.  If $G(C_0)\subset\GL_n(C_0)$ is the PPV-group over $k_0$ of Equation (\ref{leteqn2}) with respect to some fundamental solution,  where $G$ is a linear differential algebraic group defined over the differential $\Delta_t$-field $C_0,$  then $G(C_1)$ is the PPV-group over $k_1$ of (\ref{leteqn2}) with respect to some fundamental solution. 
\end{prop}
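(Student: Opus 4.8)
The plan is to mimic the proof of Proposition~\ref{letprop1}, transporting the argument from ordinary Picard-Vessiot theory to the parameterized setting, the point being that the defining equations of the group and the relations among the entries of a fundamental solution are insensitive to the enlargement of the field of $\d_0$-constants, here $\d_x$-constants. First I would fix a fundamental solution matrix $Z\in\GL_n(R_0)$ of (\ref{leteqn2}) inside a PPV-extension $R_0$ of $k_0$, so that $R_0=k_0\langle Z\rangle_{\Delta_t}$ is generated as a field by $x$, the entries of $Z$, and all their $\Delta_t$-derivatives, with no new $\d_x$-constants, and with $\Delta_t$-constant field $C_0$. I would then base-change: form $R_1:=k_1\otimes_{k_0}R_0$ with the natural $\Delta$-structure, and set $Z$ to be the image of the same matrix. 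The entries of $Z$ and their $\Delta_t$-derivatives generate a differential subring $S_1\subset R_1$ over $k_1$; let $R_1$ be the total ring of fractions (or a suitable localization). The three things to verify are: (i) $R_1$ has no new $\d_x$-constants beyond $C_1$, equivalently the $\d_x$-constants of $R_1$ equal $C_1$; (ii) $R_1$ is a $\Delta$-integral domain (or, if not, one passes to a simple quotient that still contains $k_1$ faithfully), so that a genuine PPV-extension sits inside; (iii) the PPV-group over $k_1$ with respect to $Z$ is exactly $G(C_1)$.

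For (i) the key input is that $C_0$ is relatively algebraically closed in $R_0$ in the differential sense and that $R_0$ is, as a $\d_x$-differential ring, ``generated'' by a fundamental solution of a system with constant (in $\d_x$) coefficient field $C_0$; the standard Picard-Vessiot no-new-constants lemma, applied with $C_1$ in place of $C_0$ and using that $C_1$ is differentially closed hence algebraically closed, gives that $R_1$ acquires no new $\d_x$-constants. Concretely, since $C_1$ is a field of $\d_x$-constants and $C_1\otimes_{C_0}R_0$ is reduced (because $C_0$ is algebraically closed and $R_0$ is a domain, or after replacing $R_0$ by its PV-ring we can invoke faithful flatness), any $\d_x$-constant of $C_1\otimes_{C_0}R_0$ already lies in $C_1\otimes_{C_0}C_0=C_1$. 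For (ii), if $C_1\otimes_{C_0}R_0$ fails to be a domain one replaces it by a quotient by a maximal $\Delta$-ideal; such a quotient is a PPV-ring for (\ref{leteqn2}) over $k_1$, hence its field of fractions is a PPV-extension $\widetilde{R_1}$ of $k_1$, and uniqueness of PPV-extensions (valid since $C_1$ is $\Delta_t$-differentially closed, by Cassidy--Singer) identifies it with the one we want.

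For (iii) I would argue as in the classical case that the PPV-group is cut out inside $\GL_n$ by the $\Delta_t$-differential ideal $I$ of $k_0[\GL_n]$ consisting of those $\Delta_t$-differential polynomials vanishing on $Z$ under $k_0$-linear substitution, and that the same ideal (extended to $k_1$) defines the PPV-group over $k_1$. The containment ``$\supseteq$'' is the statement that $G$, viewed over $C_1$, still acts by $k_1$-automorphisms commuting with $\Delta$, which follows because the action is defined by the coaction formula $Z\mapsto Z\cdot g$ with $g\in G(C_1)$ and $G$ is defined over $C_0\subset C_1$. The containment ``$\subseteq$'' is the assertion that any $\Delta$-$k_1$-automorphism $\sigma$ of $\widetilde{R_1}$ satisfies $\sigma(Z)=Z\cdot g_\sigma$ with $g_\sigma\in G(C_1)$: one gets $g_\sigma\in\GL_n(C_1)$ from the no-new-$\d_x$-constants property, and $g_\sigma\in G(C_1)$ because every $\Delta_t$-differential-polynomial relation over $k_0$ satisfied by the entries of $Z$ (these generate $I$, hence define $G$) is preserved by $\sigma$, forcing $g_\sigma$ to satisfy the equations of $G$; the Galois correspondence over $k_0$, together with the fact that $C_0$ is relatively $\Delta_t$-algebraically closed in $k_0$, ensures that $I$ is ``defined over $C_0$'' in the sense needed.

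The main obstacle I anticipate is (ii) together with the descent of the defining ideal in (iii): one must be careful that enlarging the $\Delta_t$-constant field from $C_0$ to the larger differentially closed field $C_1$ does not create new algebraic or differential-algebraic relations among the entries of $Z$ over $k_1$ that were absent over $k_0$ — i.e. that $I\otimes_{k_0}k_1$ is still the full vanishing ideal, equivalently that the PPV-ring over $k_0$ is a ``form'' of that over $k_1$. This is exactly where one needs $R_0$ to be obtained from a \emph{linear} system and $C_0$ to be relatively algebraically closed (indeed relatively $\Delta_t$-algebraically closed, which holds since $C_0$ is differentially closed) in the PPV-extension; the linearity guarantees $R_0$ is a torsor under $G$ over $k_0$, and torsors under an algebraic (here differential-algebraic) group defined over $C_0$ behave well under the base change $C_0\subset C_1$. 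Once that flatness/torsor bookkeeping is in place, the rest is the routine transcription of the classical no-new-constants and Galois-correspondence arguments into the Kolchin-closed setting, exactly as carried out in \cite{MS0} for Proposition~\ref{letprop1}.
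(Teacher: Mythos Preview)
The paper does not actually contain a proof of Proposition~\ref{letprop2}: it is an expository article, and immediately before stating the proposition it says only that ``In \cite{MS0} we proved a result similar to Proposition~\ref{letprop1} for parameterized Picard-Vessiot extensions,'' deferring the argument entirely to \cite{MS0}. So there is no in-text proof against which to compare your attempt.

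That said, your strategy is exactly the one the paper signals by calling the result ``similar to Proposition~\ref{letprop1}'': transport the classical base-change argument to the parameterized setting, using that the PPV-ring is a torsor under $G$ and that the defining $\Delta_t$-differential ideal of $G$ descends from $C_0$ to $C_1$. Your identification of the delicate points---that $C_1\otimes_{C_0}R_0$ need not be a domain, that one must check no new $\d_x$-constants appear, and that the vanishing ideal does not grow under base change---is accurate, and the remedies you propose (pass to a quotient by a maximal $\Delta$-ideal, invoke uniqueness of PPV-extensions over the differentially closed $C_1$, use the torsor structure) are the standard ones. One small caution: in step (i) you want to argue that the $\d_x$-constants of $C_1\otimes_{C_0}R_0$ are exactly $C_1$; this is cleanest done at the level of the PPV-\emph{ring} (a simple $\Delta$-ring, hence a torsor) rather than the field $R_0$, since the tensor product of fields over $C_0$ is well-behaved precisely because $C_0$, being differentially closed, is algebraically closed. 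With that adjustment your outline matches what one finds in \cite{MS0}.
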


Let us define the parameterized monodromy matrices, which belong to the PPV-group in the same sense as in the non-parameterized case, after extending the base-field.

 Let $\calD$ be an open subset of $\pp$ with $0 \in \calD$.   Assume that $\pp\backslash \calD$ is the union of $m$ disjoint disks $D_i$ and that for each $t \in \calU$, Equation (\ref{leteqn1})  has a unique singular point in  $D_i$.  Let $\gamma_i$,  $i = 1, \ldots , m$ be the elementary  loops  generating $\pi_1(\calD, 0)$. Let us fix a fundamental solution~$Z_0$ of (\ref{leteqn1}) in the neighborhood of $0$ and define, for each fixed $t\in\calU$, the monodromy matrices of (\ref{leteqn1}) with respect to this solution and the $\gamma_i$. These matrices, which depend on $t$, are by definition the {\it parameterized monodromy matrices} of Equation  (\ref{leteqn1}). 
 
 To prove that the monodromy matrices belong to the PPV-group we need, as in the non-parameterized case, to perform `analytic continuation'  of a polynomial expression $P(Z_0)$ in the entries of $Z_0$, where $P$ is a polynomial with coefficients in $C_0(x)$, over some differentially closed field $C_0$ not contained in $\CX$. The following  result of Seidenberg \cite{sei58,sei69} gives these coefficients, and hence $P(Z_0)$, an existence as analytic functions.
 \begin{theorem}[Seidenberg]\label{seidenberg} Let $\QX\subset \calK \subset \calK_1$  be finitely generated differential extensions of the field of rational numbers $\QX$, 
and assume that  $\calK$   consists of
meromorphic functions on some domain $\Omega\in\CX^r$. Then  $\calK_1$ is isomorphic to a field ${\calF}$ of functions that are meromorphic on a domain $\Omega_1 \subset \Omega$, such that $\calK|_{\Omega_1} \subset \calF$.
\end{theorem}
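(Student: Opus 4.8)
The plan is to follow Seidenberg's strategy: reduce to adjoining a single differential generator, and then realize that generator analytically by means of an existence theorem for holomorphic solutions of analytic differential equations, taking care that the solution one produces introduces no spurious differential-algebraic relations over $\calK$. First, since $\calK_1$ is finitely generated over $\QX$ as a differential field, it is finitely generated over $\calK$; writing $\calK_1=\calK\langle\eta_1,\dots,\eta_s\rangle$ and filtering $\calK=\calK^{(0)}\subseteq\calK^{(1)}\subseteq\cdots\subseteq\calK^{(s)}=\calK_1$ with $\calK^{(j)}=\calK^{(j-1)}\langle\eta_j\rangle$, it suffices to treat a one-step extension $\calK_1=\calK\langle\eta\rangle$: a domain is connected, so restriction of meromorphic functions to any subdomain is injective by the identity theorem, and the general statement follows by induction provided we keep track of the successive shrinkings so that the final domain still sits inside $\Omega$. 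We then distinguish whether $\eta$ is differentially transcendental or differentially algebraic over $\calK$.

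If $\eta$ is differentially transcendental over $\calK$, we exploit that $\calK$, being finitely generated over $\QX$, is countable, whereas the constant field $\CX$ is uncountable. Fix a polydisk $\Omega_1\subseteq\Omega$ and construct a holomorphic function $g$ on $\Omega_1$ — say a convergent power series $g=\sum_\nu a_\nu z^\nu$ — choosing the coefficients $a_\nu\in\CX$ recursively so that at each stage every nontrivial algebraic relation with coefficients in $\calK|_{\Omega_1}\cong\calK$ among $g$ and its partial derivatives is avoided; each such relation rules out only a proper algebraic subset of the admissible values of the next coefficient, so the construction succeeds and $g$ is differentially transcendental over $\calK|_{\Omega_1}$. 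Sending $\eta\mapsto g$ then extends the restriction isomorphism $\calK\xrightarrow{\sim}\calK|_{\Omega_1}$ to a differential isomorphism $\calK_1\xrightarrow{\sim}\calF:=\calK|_{\Omega_1}\langle g\rangle\subseteq\calM(\Omega_1)$, which is what we want.

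If $\eta$ is differentially algebraic over $\calK$, let $P$ be a differential polynomial of minimal rank over $\calK$ vanishing at $\eta$, with leader $u$ (the highest derivative of $\eta$ occurring in $P$ for the chosen ranking) and separant $S=\partial P/\partial u$; minimality of $P$ forces $S(\eta)\neq 0$ and $S$ to have lower rank. Pick a point $z_0$ and a small polydisk $\Omega_1\ni z_0$ inside $\Omega$ on which all coefficients of $P$ and $S$ are holomorphic. Prescribing the finitely many initial derivatives of the unknown up to $u$ at $z_0$, and using the implicit function theorem (legitimate where $S\neq 0$) to express $u$ holomorphically in terms of the lower derivatives and the coefficients, one obtains a determined system in solved (Cauchy--Kovalevskaya) form; the Cauchy--Kovalevskaya theorem — or, when $\Omega\subseteq\CX^r$ with $r>1$, Riquier's existence theorem for passive orthonomic systems — then produces a holomorphic solution $\tilde\eta$ on $\Omega_1$ with $S(\tilde\eta)\neq 0$. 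One has to choose the initial data so that $\tilde\eta$ is a \emph{generic} zero of $P$ over $\calK|_{\Omega_1}$, i.e. no differential polynomial over $\calK$ of rank lower than $P$ vanishes at $\tilde\eta$: the bad initial data form a countable union of proper analytic subsets of the finite-dimensional $\CX$-vector space of admissible data, hence a valid choice exists. Then $\eta\mapsto\tilde\eta$ extends to the desired differential isomorphism $\calK_1\xrightarrow{\sim}\calF:=\calK|_{\Omega_1}\langle\tilde\eta\rangle\subseteq\calM(\Omega_1)$.

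The crux is the genericity step in the differentially algebraic case — showing that the analytically constructed solution $\tilde\eta$ satisfies \emph{no} differential-algebraic relation over $\calK$ beyond those entailed by $P$ — which rests squarely on the contrast between the countable field $\calK$ and the uncountable field of constants; in the several-variable setting there is the additional task of verifying the integrability (passivity) conditions so that Riquier's theorem is applicable. The remaining ingredients — injectivity of restriction, the bookkeeping of the nested domains through the induction, and convergence of the series in the transcendental case — are routine.
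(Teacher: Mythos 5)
You should first be aware that the paper offers no proof of this statement: it is imported as a black box from Seidenberg's papers \cite{sei58}, \cite{sei69}, so there is no argument in the text to compare yours against; the only question is whether your outline would actually establish the theorem.

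Your skeleton is the right one --- reduce to a single differential generator (finitely many times, so the finitely many shrinkings of the domain are harmless), then split into the differentially transcendental and differentially algebraic cases and realize the generator analytically while avoiding spurious relations over the countable field $\calK$. But the two steps you yourself flag as the crux are asserted, not proved, and they are where the entire content of the theorem sits. In the differentially algebraic case with $r>1$ commuting derivations, a single minimal differential polynomial $P$ together with its separant does not present the extension $\calK\langle\eta\rangle$: you need the full characteristic set of the prime differential ideal of $\eta$ over $\calK$, you must verify its coherence before Riquier's existence theorem gives you any analytic solution at all, and above all you must show that the solution produced is a \emph{generic} zero of that prime ideal. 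Your justification --- that the bad initial data form a countable union of \emph{proper} analytic subsets --- is circular as stated: properness of the set where a lower-rank $Q$ vanishes on the solution requires exhibiting analytic initial data with $Q(\tilde\eta)\not\equiv 0$, that is, an analytic realization of a generic zero, which is essentially the assertion being proved; making this non-circular is exactly the Ritt--Raudenbush reduction argument that has to be carried out. Similarly, in the transcendental case a relation $Q(g)\equiv 0$ constrains the whole Taylor tail of $g$, not ``the next coefficient'': one must diagonalize, forcing for each of the countably many nonzero $Q$ over $\calK$ some finite-order Taylor coefficient of $Q(g)$ to be nonzero while keeping the series convergent, and this presupposes knowing that no nonzero $Q$ annihilates every formal power series, which again requires an argument. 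Both gaps are fillable, and Seidenberg fills them (his published proof is organized rather differently, realizing the differential field as an increasing union of finitely generated \emph{ordinary} field extensions and extending the analytic realization one ordinary step at a time while controlling the nested domains), but as written your proof does not close them.
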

 
This leads to the expected analogue of Corollary \ref{cstmonPV}:
\begin{theorem}\label{mongal}
Assume in Equation (\ref{leteqn1}) that $A\in\gl_n(C_0(x))$, where   $C_0$ is any differentially closed $\Delta_t$-field containing  $\CX$ and let  $C_1$ be  any differentially closed $\Delta_t$-field containing $C_0$ and the entries of the parameterized monodromy matrices  of Equation (\ref{leteqn1}) with respect to a fundamental solution of (\ref{leteqn1}). Then the parameterized  monodromy matrices belong to   $G(C_1)$, where $G$ is the PPV-group of (\ref{leteqn1}) over the  $\Delta$-field $C_0(x)$. 
\end{theorem}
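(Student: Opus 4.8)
The plan is to mimic the classical argument underlying Corollary~\ref{cstmonPV}, with Seidenberg's theorem supplying the analytic realization that the abstract differentially closed base-field $C_0$ does not a priori provide. First I would set up the two base-fields $k_0=C_0(x)$ and $k_1=C_1(x)$ and fix the fundamental solution $Z_0$ of (\ref{leteqn1}) near $0$ that is used both to define the PPV-extension and to define the parameterized monodromy matrices $M_1,\ldots,M_m$ associated with the loops $\gamma_1,\ldots,\gamma_m$ generating $\pi_1(\calD,0)$. The PPV-extension $K_0\supset k_0$ is generated over $k_0$ by the entries of $Z_0$ and their $\Delta_t$-derivatives, and by Proposition~\ref{letprop2} the PPV-group over $k_1$ is $G(C_1)$, realized inside the PPV-extension $K_1=k_1\cdot K_0$. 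So it suffices to show that for each $i$, the linear substitution $Z_0\mapsto Z_0 M_i$ is induced by an element of the $\Delta$-automorphism group of $K_1$ over $k_1$, i.e. that it extends to a $\Delta_t$-differential $k_1$-automorphism of $K_1$.

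The key step is the analytic-continuation argument. Take any $\Delta_t$-differential polynomial $P$ in the matrix entries, with coefficients in $C_0(x)$, that vanishes on $Z_0$ (these are exactly the defining relations of $K_0$ over $k_0$, equivalently over $k_1$); I must show $P$ also vanishes on $Z_0M_i$. The coefficients of $P$ involve only finitely many elements of $C_0$, generating a finitely generated differential subextension $\calK$ of $\QX$; enlarging $\calK$ I may assume it contains enough of $C_0$ to carry the computation, and $\calK_1 := \calK\langle \text{entries of } Z_0, M_i\rangle$ is still finitely generated over $\QX$. By Seidenberg's theorem (Theorem~\ref{seidenberg}) I may replace $\calK$ by an isomorphic copy consisting of meromorphic functions on a genuine domain $\Omega\subset\CX^r$, and then $\calK_1$ by meromorphic functions on a subdomain $\Omega_1$, compatibly. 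Under this realization $Z_0$ becomes an honest multivalued analytic fundamental solution of (\ref{leteqn1}) near $0$ with parameters ranging over $\Omega_1$, the monodromy matrices $M_i$ become the honest analytic monodromy transformations along $\gamma_i$, and $P$ becomes an honest meromorphic differential polynomial identity. Now the classical fact applies: analytically continuing the identity $P(Z_0)=0$ around $\gamma_i$ gives $P(Z_0 M_i)=0$ as meromorphic functions on $\Omega_1$, hence (transporting back through the Seidenberg isomorphism) as elements of $\calK_1$, hence in $K_1$. Because this holds for every defining relation $P$, the assignment $Z_0\mapsto Z_0 M_i$ respects all $\Delta_t$-differential $k_1$-algebraic relations and therefore extends to a $\Delta$-differential $k_1$-automorphism $\sigma_i$ of $K_1$; since $\sigma_i$ acts on $Z_0$ by right multiplication by the invertible constant (in $C_1$) matrix $M_i$, it lies in $G(C_1)$.

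Finally I would assemble these pieces: each $M_i$ is thereby exhibited as an element of $G(C_1)$, and since the parameterized monodromy matrices are by definition exactly $M_1,\ldots,M_m$, the theorem follows. I expect the main obstacle to be the compatibility bookkeeping in the application of Seidenberg's theorem: one must make sure that a single subdomain $\Omega_1$ and a single isomorphism simultaneously realize the coefficients of $P$, the entries of $Z_0$, and the entries of $M_i$ as meromorphic functions in a way that is compatible with the differential structure and with the loops $\gamma_i$ (which should be arranged to avoid the moving singularities over all of $\calU$, as in the hypotheses on $\calD$), and that the shrinking of the domain does not destroy the nonsingularity of the base-point $0$ or the topology of $\calD$. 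The $\Delta_t$-derivatives add nothing essential beyond a uniform-growth control, which is already guaranteed by Proposition~\ref{modgrowth} in the $prs_0$ setting and, here, simply by analyticity in $t$ on $\calU$; one checks that differentiating under analytic continuation commutes with the loop, so the $\d_{t_j}$-components of the relations are continued along with the rest.
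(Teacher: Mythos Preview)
Your proposal is correct and follows essentially the same approach as the paper, which only sketches the argument: realize the finitely many abstract coefficients from $C_0$ appearing in a defining relation $P(Z_0)=0$ as genuine meromorphic functions via Seidenberg's theorem, then analytically continue the identity around $\gamma_i$ to obtain $P(Z_0M_i)=0$, so that $Z_0\mapsto Z_0M_i$ defines a $\Delta$-differential $k_1$-automorphism and hence an element of $G(C_1)$ by Proposition~\ref{letprop2}. Your discussion of the compatibility bookkeeping (choosing $\calK$ already meromorphic, shrinking to $\Omega_1$ without disturbing the loops or the base-point) is exactly the care the paper alludes to but does not spell out.
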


\subsection{A parameterized version of Schlesinger's theorem}\label{pvst}
Consider a family of  equations
\begin{eqnarray}\label{leteqn3}
{\frac{\d Y}{\d x}}&=&A(x,t) Y
\end{eqnarray}
where the entries of $A$ are rational in $x$, and analytic in $t$ in some open subset $\calU$ of $\CX^r$.
 Let as before $\calD$ be an open subset of $\pp$ with $0 \in \calD$.   Assume that $\pp\backslash \calD$ is the union of $m$ disjoint disks $D_i$ and that for each $t \in \calU$, Equation (\ref{leteqn3})  
has a unique singular point $\alpha_i(t)$ in  each $D_i$, and no singular points otherwise.  Let $\gamma_i$,  $i = 1, \ldots , m$ be the elementary  loops  generating $\pi_1(\calD, 0)$. Locally at $0$ we can fix a fundamental solution $Z_0$,  analytic in $\calV\times\calU$ where $\calV$ is neighbourhood of $0$ in $\calD\subset\CX$ and $\calU$ a neighbourhood of $0$ in $\CX^r$. 
Let as before 
$\Delta=\{\d_x, \d_{t_1},\ldots,\d_{t_r}\}$  and 
 $\Delta_t
=\{\d_{t_1},\ldots,\d_{t_r}\}$. 

\smallskip
 In \cite{MS0} we proved the following parameterized analogue of Schlesinger's theorem. 
 \begin{theorem}\label{schlesinger}
With notation and asumptions as before, assume that Equation (\ref{leteqn3}) has parameterized regular singularities only, near each $\alpha_i(0)$, $i=1,\ldots,m$. Let $k$ be a  differentially closed $\Delta_t$-field containing  the $x$-coefficients of the entries of $A$, the singularities $\alpha_i(t)$ of (\ref{leteqn3}) and the entries of the parameterized monodromy matrices with respect to $Z_0$. Then the parameterized monodromy matrices  generate a Kolchin-dense subgroup of $G(k)$, where $G$ is the PPV-group of (\ref{leteqn3})  over $k(x)$.
\end{theorem}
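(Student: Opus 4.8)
The plan is to follow the classical strategy behind Schlesinger's theorem, but to carry it out in the Kolchin topology of the differential algebraic group $G$ rather than in the Zariski topology, using the parameterized monodromy action as the source of density. First I would let $H$ denote the smallest Kolchin-closed subgroup of $G(k)$ containing all parameterized monodromy matrices $M_1,\dots,M_m$ (relative to the fixed solution $Z_0$), and let $K$ be the PPV-extension of $k(x)$ attached to Equation (\ref{leteqn3}), generated by the entries of a fundamental solution together with their $\Delta$-derivatives. By Theorem \ref{mongal} (applied after enlarging the base field as permitted by Proposition \ref{letprop2}, so that $k$ contains the relevant entries) the $M_i$ genuinely lie in $G(k)$, so $H$ is well defined; the goal is to show $H=G(k)$. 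By the parameterized Galois correspondence recalled in Section~1, it suffices to prove that the fixed field $K^{H}$ equals $k(x)$: any element of $K$ fixed by every monodromy matrix must already be a rational function of $x$ over $k$.

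So let $f\in K$ be fixed by $H$, i.e.\ $\gamma_i\cdot f=f$ for $i=1,\dots,m$. The element $f$ is, by construction of the PPV-extension, a rational expression over $k(x)$ in the entries of $Z_0$ and their $\Delta_t$-derivatives. Here is where the hypothesis of \emph{parameterized regular singularities} and Proposition~\ref{modgrowth} enter: at each singular disk $D_i$, after passing to the equivalent equation with a simple singularity, the fundamental solution has the moderate-growth normal form (\ref{sol}), and by part 2) of Proposition~\ref{modgrowth} every $\Delta_t$-derivative of $Z_0$ — hence $f$ itself — grows at most polynomially in $1/(x-\alpha_i(t))$ as $x\to\alpha_i(t)$, uniformly for $t$ near $0$. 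Being invariant under all the $\gamma_i$, the function $f$ extends to a single-valued meromorphic function on a neighbourhood of $\calD$ in the $x$-line, for each fixed $t$ in a suitable subdomain; the moderate-growth bounds at the finitely many points $\alpha_i(t)$ (and at $\infty$, which is covered since $\pp\setminus\calD$ is the union of the $D_i$) force the singularities to be at worst poles. Therefore, for each fixed $t$, $f(\cdot,t)$ is a rational function of $x$. A standard argument — expanding $f$ in partial fractions in $x$ with coefficients that are meromorphic, indeed analytic on the parameter domain — shows these coefficients lie in the field of analytic functions of $t$ that we started from, and after enlarging $k$ as in the statement (it is assumed to contain the $x$-coefficients of $A$ and the $\alpha_i(t)$) we get $f\in k(x)$. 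To make the "meromorphic function of $t$" talk rigorous, where the coefficients of $f$ a priori live only in the abstract differentially closed field $C_0$, one invokes Seidenberg's theorem (Theorem~\ref{seidenberg}): the finitely generated differential subfield of $C_0$ generated by the data of the problem and the relevant coefficients is realized as a field of meromorphic functions on a common domain, legitimizing the analytic-continuation argument on $P(Z_0)$.

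The main obstacle is the second step — controlling the nature of an $H$-invariant element $f$ at the moving singularities $\alpha_i(t)$ simultaneously in $x$ and $t$, and concluding rationality in $x$ with coefficients in the right field. In the non-parameterized Schlesinger theorem this is the familiar "regular singular $+$ single-valued $\Rightarrow$ rational" implication; here one additionally needs the \emph{uniform} moderate growth of $\Delta_t$-derivatives, which is exactly what Proposition~\ref{modgrowth} supplies, and one must keep track of the base field through Propositions~\ref{letprop1}/\ref{letprop2} and Seidenberg's theorem so that the conclusion $K^{H}=k(x)$ is stated over the correct differentially closed $k$. Once $K^{H}=k(x)$ is established, the Galois correspondence immediately yields $H=G(k)$, i.e.\ the parameterized monodromy matrices are Kolchin-dense in $G(k)$, completing the proof.
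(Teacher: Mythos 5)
Your proposal is correct and follows essentially the same route as the paper: reduce via the parameterized Galois correspondence to showing that a monodromy-invariant element of the PPV-extension lies in $k(x)$, use Seidenberg's theorem to realize it as a meromorphic function of $(x,t)$, and combine single-valuedness with the uniform moderate growth of Proposition~\ref{modgrowth} to get rationality in $x$ for each fixed $t$. The ``standard partial-fractions argument'' you invoke at the end is exactly what the paper formalizes as Lemma~\ref{palais} (the Palais-type lemma), so the two arguments coincide step for step.
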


\begin{proof} To prove this theorem it is sufficient, by the Galois correspondance of PPV-theory,   to show that any element of the PPV-extension $k(x)\langle Z_0\rangle$ ($\Delta$-differentially generated by a fundamental solution $Z_0$) that is left invariant by the action of the parameterized monodromy matrices, is an element of the base-field $k(x)$. Fix such an $f\in k(x)\langle Z_0\rangle$, invariant by all the parameterized monodromy matrices. The idea of the proof is the following. Let $\calF_0$ be  the differential $\Delta_t$-subfield of $k$ generated over $\QX$ by the $x$-coefficiens of $A$, the singular points $\alpha_i(t)$  and the entries of the parameterized monodromy matrices (with respect to the elementary loops around the $\alpha_i(t)$). Let further $\calF_1$ denote any $\Delta_t$-subfield of $k$ containing $\calF_0$ such that $f\in\calF_1(x)\langle Z_0\rangle$. By Seidenberg's theorem \ref{seidenberg}, we can see $f$ as a meromorphic  function  on a suitable domain of the $(x,t)$-space. Since for each fixed $t$, the function $f$ is invariant by the monodromy matrices and has moreover  moderate growth at each singular point by Prop. \ref{modgrowth}, it is indeed a rational function of $x$. Note that, as in the non-parameterized case, since $f$ is single-valued, it has an isolated pole at each singular point of the equation (\cf \cite{Kuga}, Preparation Theorem 18.2 p.118). To show that it is globally a rational function of $x$, we apply the  lemma below, inspired by a result of R. Palais \cite{Pa78}.
 \end{proof}
 
 \begin{lem}\label{palais} Let $\calF$ be a $\Delta$-field of functions that are meromorphic on $\calV \times \calU$ where $\calV\subset\CX$ and $\calU\subset\CX^r$ are open connected sets, and let $\calC_x=\{u\in\calF\  |\  \d_xu=0 \}$. Furthermore assume $x \in \calF$. Let $f\in\calF$ be such that $f(x,t)\in\CX(x)$ for each $t\in\calU$. Then for some $m\in\NX$, there exist $a_0,\ldots,a_m,b_0,\ldots, b_m\in\calC_x$ such that
\[ f(x,t)=\frac{\sum_{i=0}^m a_ix^i}{\sum_{i=0}^m b_ix^i}\]
\end{lem}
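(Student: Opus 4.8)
The plan is to exploit the hypothesis that, for each fixed $t$, the function $f(\cdot,t)$ is a rational function of $x$, in order to produce numerator and denominator coefficients that are themselves meromorphic in $t$ and killed by $\d_x$. First I would fix a point $t_0\in\calU$ and write $f(x,t_0)=p_{t_0}(x)/q_{t_0}(x)$ in lowest terms, setting $m$ to be an integer bounded below by $\max(\deg p_{t_0},\deg q_{t_0})$; the point of allowing $m$ to be merely an upper bound is that the degree may drop, but cannot jump, on a neighbourhood of $t_0$, so working with a fixed $m$ (after possibly shrinking $\calU$) is harmless. The task is then to show that the coefficients of a degree-$m$ numerator and denominator representing $f$ can be chosen in $\calC_x$, i.e. to be functions of $t$ alone.

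The key step is an interpolation/linear-algebra argument. Pick $m+1$ distinct complex values $x_0,\ldots,x_m$ in $\calV$ that, after shrinking $\calV\times\calU$, avoid all poles of $f$ (and of the relevant denominators) on the domain; this is possible because the poles of $f(\cdot,t)$ form, locally in $t$, a finite set depending continuously on $t$. For each such $x_j$ the value $f(x_j,t)$ is a meromorphic function of $t$ alone — call it $c_j(t)\in\calC_x$ — since $x_j$ is a constant. Now the identity $f(x,t)=\bigl(\sum a_i x^i\bigr)/\bigl(\sum b_i x^i\bigr)$, once we demand it hold at $x=x_0,\ldots,x_m$, becomes for each fixed $t$ the linear system $\sum_i a_i x_j^i = c_j(t)\sum_i b_i x_j^i$ in the $2m+2$ unknowns $a_i,b_i$. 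Because $f(\cdot,t)$ genuinely is rational of degree $\le m$ for every $t$, this system has, for every $t$, a nonzero solution; a Cramer's-rule / minimal-rank argument then selects a solution whose entries are rational functions of the $c_j(t)$ — hence meromorphic in $t$, hence in $\calC_x$ — and one checks the resulting rational function of $x$ agrees with $f$ at $m+1$ points for each $t$ and therefore equals $f$ identically (two rational functions of degree $\le m$ agreeing at $m+1$ points coincide). Clearing the common denominator of the $a_i,b_i$ gives the asserted form with $a_i,b_i\in\calC_x$.

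The main obstacle I anticipate is bookkeeping around the \emph{uniformity in $t$}: a priori the degree of $f(\cdot,t)$, the location of its poles, and the minors that are invertible in the Cramer step could all vary with $t$, so one must shrink $\calV\times\calU$ to a smaller connected open set on which everything is controlled, and then argue (using connectedness and the identity theorem for meromorphic functions) that the representation obtained on the smaller set propagates back. A clean way to handle the degenerations is to work with the resultant-type determinant of the interpolation system: it is meromorphic in $t$, not identically zero by the rationality hypothesis, so its zero set is thin, and off that thin set the construction above goes through verbatim; the coefficients $a_i,b_i$, being meromorphic off a thin set and locally bounded in the right sense, extend meromorphically across it. This is exactly the kind of "spreading out" made legitimate by Seidenberg's theorem and the meromorphy framework already set up, so it should fit the style of the paper without new machinery.
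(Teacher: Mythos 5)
The paper states this lemma without proof (referring to \cite{MS0} and \cite{Pa78}), so I am judging your plan on its own terms. It has a genuine gap at its first and most important step: the uniform bound on $\deg_x f(\cdot,t)$. You assert that the degree ``may drop, but cannot jump, on a neighbourhood of $t_0$,'' but the semicontinuity goes the other way. The set $E_N=\{t: f(\cdot,t)\ \text{is rational of degree}\le N\}$ is cut out by the vanishing of (Hankel/Kronecker, or Wronskian) determinants built from the $x$-Taylor coefficients of $f$, which are meromorphic in $t$; hence $E_N$ is closed and the degree is \emph{lower} semicontinuous. For example $f(x,t)=x+\tfrac{t}{x-2}$ has degree $1$ at $t=0$ and degree $2$ at every nearby $t$, so fixing $m=\deg f(\cdot,t_0)$ and shrinking $\calU$ does not give an upper bound. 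The correct route is a Baire category argument: $\bigcup_N E_N=\calU$, some $E_N$ has nonempty interior, the defining determinants vanish on an open set and hence (identity theorem, connectedness) identically, so $E_N=\calU$. This step is the actual content of the Palais-type statement and is absent from your plan.

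Two further problems. First, your interpolation delivers $a_i,b_i$ as rational expressions in the values $c_j(t)=f(x_j,t)$, i.e.\ as meromorphic functions of $t$; but the lemma requires $a_i,b_i\in\calC_x\subset\calF$, and $\calF$ is a \emph{prescribed} $\Delta$-field, not the field of all meromorphic functions on $\calU$ --- nothing guarantees that $f(x_j,\cdot)$, viewed as an $x$-constant function on $\calV\times\calU$, lies in $\calF$. The clean fix, which is the standard argument here, is to express ``$f$ is rational in $x$ of degree $\le N$ over $\calC_x$'' as linear dependence of $1,x,\dots,x^N,f,xf,\dots,x^Nf$ over the $\d_x$-constants, detected by the vanishing of their $\d_x$-Wronskian $W_N\in\calF$; once $W_N=0$ in $\calF$ (by the Baire/identity-theorem step above applied to $W_N$), the Wronskian criterion for differential fields produces coefficients lying in $\calC_x$ itself. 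Second, a minor but real slip: two rational functions with numerator and denominator of degree $\le m$ that agree at $m+1$ points need not coincide; you need $2m+1$ points (so that $P_1Q_2-P_2Q_1$, of degree $\le 2m$, is forced to vanish). With these repairs the interpolation idea can be made to work for the meromorphy of the coefficients, but as written the proposal does not establish the lemma.
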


\section{PPV-characterization of isomonodromy} 
 Let us first recall  that classical differential Galois theory, or PV-theory, extends easily and naturally to differential fields with several derivations. More precisely, let $k$ be a $\Delta$-differential field with derivations $\Delta=\{\d_0,\d_1\ldots,\d_r\}$, and consider a linear system of  equations
 \begin{eqnarray}\label{fullsyst}
\left\{ \begin{array}{ccc}
\d_0 Y &= &A_0Y\\
\d_1 Y&=&A_1Y \\
 &\vdots& \\
\d_r    Y   &=&A_rY\\
\end{array}
\right. 
\end{eqnarray}
where $A_0, A_1,\ldots,A_r\in\gl(n,k)$.

Assuming the subfield of $\Delta$-constants $C$ of $k$ is algebraically closed, for each  system (\ref{fullsyst}) there is a unique PV-extension $K$ of $k$, that is, a $\Delta$-differential extension of $k$ generated by the entries of a fundamental solution of (\ref{fullsyst}) with no new $\Delta$-constants. The corresponding $PV$-group of differential $k$-automorphisms of  $K$ is a linear algebraic group  $G\subset \GL(n,C)$, unique up to differential isomorphism, and satisfying Galois correspondence.

\subsection{Integrable systems}
 The notion of integrability has a nice interpretation in terms of PPV-theory.  Integrability, over abstract differential fields, has the same definition as over fields of analytic functions (\cf \cite{CaSi}).
\begin{definition} With notation as above
\begin{enumerate}

\item the differential system (\ref{fullsyst}) is {\em integrable} if
\[ \d_i A_j-\d_jA_i=[A_i,A_j] \]
for all $0\le i,j\le r$, where $[\ ,]$ denotes the Lie bracket,

\medskip
\item an equation \[ \d_0 Y=AY, \quad A\in\gl(n,k)\]
is {\em completely integrable} if it can be completed into a system (\ref{fullsyst})  with $A_0=A$.
\end{enumerate}

\end{definition}

For completely integrable equations, PV-theory and PPV-theory get close (\cf \cite{CaSi}), Lemma 9.9).
 \begin{lemma} With notation as above,  assume the field $k_0$ of $\d_0$-constants of $k$ is $\Delta$-differentially closed, and let 
\begin{eqnarray}\label{eqsimple}
\d_0 Y=A Y, \quad A\in\gl(n,k)
\end{eqnarray}
be a completely integrable system,  completable into an integrable system (\ref{fullsyst}) as above.
Then any PV-extension of $k$ for (\ref{fullsyst}) is a PPV-extension of $k$ for (\ref{eqsimple}).
\end{lemma}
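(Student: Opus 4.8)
The plan is to unwind both definitions and show the two universal properties match. Let $K$ be a PV-extension of $k$ for the full integrable system (\ref{fullsyst}) with $A_0 = A$. By definition $K$ is $\Delta$-generated over $k$ by the entries of a fundamental solution matrix $Z$ of (\ref{fullsyst}), and $K^\Delta = k^\Delta = C$ (the algebraically closed field of $\Delta$-constants of $k$). The goal is to verify that $K$ is also a PPV-extension of $k$ for (\ref{eqsimple}): that is, (i) $K$ is $\Delta$-generated over $k$ by the entries of a fundamental solution of $\d_0 Y = AY$, and (ii) $K$ has no new $\d_0$-constants, i.e. $K^{\d_0} = k^{\d_0} = k_0$.

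For (i), the same matrix $Z$ works: since $Z$ solves the full system it in particular solves $\d_0 Z = AZ$, so it is a fundamental solution of (\ref{eqsimple}), and it $\Delta$-generates $K$ over $k$ by hypothesis. So (i) is immediate. The content is in (ii). First I would observe the easy inclusion $k_0 = k^{\d_0} \subseteq K^{\d_0}$. For the reverse inclusion, take $c \in K^{\d_0}$, so $\d_0 c = 0$. I want to conclude $c \in k_0$. The natural route: consider the field $k_0$, which by assumption is $\Delta$-differentially closed, hence in particular a $\Delta_t$-field (with $\Delta_t = \{\d_1,\ldots,\d_r\}$) that is differentially closed, and in particular algebraically closed. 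Restricting all the derivations $\d_1,\ldots,\d_r$ to $K^{\d_0}$ makes $K^{\d_0}$ a $\Delta_t$-field extension of $k_0$ (the $\d_i$ commute with $\d_0$, so they preserve $K^{\d_0}$).

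The key step is then to show $K^{\d_0}$ contains no new $\Delta_t$-constants over $k_0$ beyond what a PV-type argument forces, and in fact equals $k_0$. Here is where I would invoke the integrability of (\ref{fullsyst}) directly: integrability is exactly the condition guaranteeing that near any point the system $\d_i Y = A_i Y$ ($i=0,\ldots,r$) has a common fundamental solution, and more relevantly it makes the PV-group $G \subseteq \GL(n,C)$ act compatibly with all derivations. The cleanest abstract argument: suppose $c \in K^{\d_0} \setminus k_0$. Then $c$ is not fixed by the PV-group $G$ of $K/k$ (by the Galois correspondence for the PV-extension $K/k$, since $k = K^G$). Pick $\sigma \in G$ with $\sigma(c) \neq c$. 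But $\sigma$ commutes with $\d_0$, so $\sigma(c) \in K^{\d_0}$ as well, and the orbit of $c$ under $G$ lies in $K^{\d_0}$. Now one shows this orbit generates, over $k_0$, a finitely-generated $\Delta_t$-algebra on which $G$ acts, and — using that $G$ as the PV-group acts on the fundamental solution entries while $\d_0$-constancy cuts things down — that the $\d_0$-constants of $K$ are acted on through a quotient giving a $k_0$-structure; comparing Kolchin dimensions / transcendence degrees forces $c$ to already lie in $k_0$. Concretely, I expect to cite or reproduce the argument of \cite{CaSi}, Lemma 9.9: the ring generated over $k_0$ by the entries of $Z$ and $\det(Z)^{-1}$, intersected with $K^{\d_0}$, is a $G$-stable $\Delta_t$-subalgebra whose fraction field must be $k_0$ because otherwise its $G$-action would produce nontrivial $\d_0$-constants contradicting $K^\Delta = C \subseteq k_0$.

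The main obstacle is precisely this last comparison: showing that a new $\d_0$-constant in $K$ would violate $K^\Delta = C$ requires carefully tracking how the $\Delta_t$-derivations and the $G$-action interact on the $\d_0$-constant subfield, and in particular using that $k_0$ being $\Delta$-differentially closed (not merely algebraically closed) is what pins down the constants. I would handle it by reducing to the statement that for the completely integrable situation the PV-extension $K/k$, when we forget $\d_0$ and keep $\Delta_t$, is already "saturated" — any $\d_0$-constant is a $\Delta_t$-rational function of the data over $k_0$ — which is the technical heart of \cite{CaSi}, Lemma 9.9 and which I would either cite outright or spell out along the lines just sketched.
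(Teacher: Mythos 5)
Your reduction is the right one: point (i) is immediate, and the whole content is the equality $K^{\d_0}=k_0$. The paper itself does not prove this either; it defers to \cite{CaSi}, Lemma 9.9, and only remarks that differential closedness of $k_0$ forces the field $C$ of $\Delta$-constants to be algebraically closed, so that the PV-extension $K$ for the full system exists in the first place. Since you also end by citing that lemma, your proposal and the paper coincide in what is actually written down. The difficulty is the argument you interpose before the citation: the step ``otherwise its $G$-action would produce nontrivial $\d_0$-constants contradicting $K^\Delta=C$'' is not a contradiction. A $\d_0$-constant that is not a $\Delta$-constant is perfectly compatible with $K^\Delta=C$; that is precisely the situation you have to exclude. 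Concretely, take $k=C(x,t)$ with $\d_0=\d_x$ and the integrable rank-one system $\d_x y=0$, $\d_t y=y$. Its PV-extension is $K=k(e^t)$, which satisfies $K^\Delta=C$ and yet $K^{\d_x}=C(t,e^t)\supsetneq C(t)=k_0$. So the lemma is false when $k_0$ is merely algebraically closed, and since your sketch never uses differential closedness of $k_0$ except as an announcement, it cannot close: every step you propose (the $G$-stability of $K^{\d_0}$, the finiteness of transcendence degrees, the fixed field computation $(K^{\d_0})^G=k_0$) holds verbatim in this counterexample.

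What is missing is the mechanism by which differential closedness actually enters. The route taken in \cite{CaSi} is Kolchin's theory of constrained extensions (recalled in the paper's appendix): the PV-ring $k[Z,\det Z^{-1}]$ is $\Delta$-simple and finitely generated over $k$, and for such rings the field of $\d_0$-constants of the quotient field is a \emph{constrained} $\Delta_t$-extension of $k_0$; a differentially closed field has no proper constrained extension, whence $K^{\d_0}=k_0$. In the counterexample above, $C(t)(e^t)$ is a proper constrained extension of $C(t)$ (constrained by $y\neq 0$ relative to $y'=y$), which is exactly why the statement fails there. An alternative repair is to use differential closedness to solve, inside $\GL_n(k_0)$, the auxiliary integrable system that normalizes a fundamental solution of the PPV-extension into one of the full system, and then invoke uniqueness of PV-extensions; but either way the hypothesis must be used concretely, and your orbit/dimension considerations do not substitute for it.
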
 

The proof of this lemma relies on the fact that a differentially closed field is {\em a fortiori} algebraically closed, and that the field of constants of an algebraically closed differentially field is itself algebraically closed. This lemma was used by Cassidy and Singer  to give the following PPV-characterization of integrability (\cf \cite{CaSi},  Prop. 3.9).
\begin{prop} [Cassidy-Singer]\label{cassidysinger}With notation as above, assume $k_0$ is differentially closed, and let $C\subset k_0$ denote the subfield of $\Delta$-constants of $k$.
\begin{enumerate}
\item Equation (\ref{eqsimple}) is completely integrable if and only if its PPV-group over $k$ is conjugate in $\GL(n,k_0)$ to the group $G(C)$ of $C$-points of some  linear algebraic group defined over $C$.  
\item In particular, (1) holds if  $A\in\gl(C)$.
\end{enumerate}
\end{prop}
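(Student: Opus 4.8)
The plan is to prove the equivalence in (1) by establishing the two implications in turn, and then to read off (2).

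For the ``only if'' direction I would start from a completion of (\ref{eqsimple}) into an integrable system (\ref{fullsyst}) with $A_0=A$. Since $k_0$ is differentially closed it is algebraically closed, and so is its field $C$ of constants; hence classical Picard--Vessiot theory for several derivations applies to (\ref{fullsyst}) and yields a PV-extension $K\supset k$ whose PV-group is $G(C)$ for a linear algebraic group $G$ defined over $C$. By the lemma just above, $K$ is a PPV-extension of $k$ for (\ref{eqsimple}), hence \emph{the} PPV-extension (uniqueness again uses differential closedness of $k_0$). I would then fix a fundamental solution $Z\in\GL_n(K)$ common to all equations of (\ref{fullsyst}): for a $\Delta$-automorphism $\sigma$ of $K$ over $k$, writing $\sigma(Z)=ZM_\sigma$ with $M_\sigma\in\GL(n,k_0)$, applying $\sigma$ to each relation $\d_iZ=A_iZ$ gives $Z\,\d_iM_\sigma=0$, so $\d_iM_\sigma=0$ and $M_\sigma\in\GL(n,C)$. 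Thus the PPV-group of (\ref{eqsimple}) with respect to this $Z$ is exactly $G(C)$ --- it is the very automorphism group that is the PV-group of (\ref{fullsyst}) --- and with respect to an arbitrary fundamental solution of (\ref{eqsimple}), which differs from $Z$ by right multiplication by some $P\in\GL(n,k_0)$ (as $\d_0P=0$), it is $P^{-1}G(C)P$, i.e. conjugate in $\GL(n,k_0)$ to $G(C)$.

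For the ``if'' direction, suppose the PPV-group $H$ of (\ref{eqsimple}) with respect to some fundamental solution $Z$ satisfies $P^{-1}HP=G(C)$ with $P\in\GL(n,k_0)$ and $G$ a linear algebraic group over $C$. Replacing $Z$ by $ZP$, still a fundamental solution of (\ref{eqsimple}) because $\d_0P=0$, I may assume $H=G(C)\subset\GL(n,C)$. The point is then to reconstruct the missing equations: set $A_i:=(\d_iZ)Z^{-1}$ for $i=1,\ldots,r$ and $A_0:=A$. For $\sigma\in H$ one has $\sigma(Z)=ZM_\sigma$ with $M_\sigma$ killed by every derivation, so a one-line computation gives $\sigma(A_i)=A_i$; hence each $A_i$ lies in the fixed field of $H$ in $K$, which by the Galois correspondence of PPV-theory is exactly $k$. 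Therefore $A_i\in\gl(n,k)$, the matrix $Z$ is a common fundamental solution of $\d_iY=A_iY$, $0\le i\le r$, and cross-differentiating (using $\det Z\ne 0$) yields $\d_iA_j-\d_jA_i=[A_i,A_j]$; so (\ref{eqsimple}) is completely integrable. Part (2) is then immediate: if $A\in\gl(n,C)$ then $\d_iA=0$ for $i\ge1$, so (\ref{eqsimple}) together with the trivial equations $\d_iY=0$ forms an integrable system.

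The step I expect to be the main obstacle is, in the forward direction, reconciling the two incarnations of the Galois group: the PPV-group is a priori only a differential algebraic subgroup of $\GL(n,k_0)$, and one must check that, measured against a solution of the \emph{full} integrable system, it collapses onto the constant points $G(C)$ of an ordinary algebraic group, the ambiguity in the choice of a solution of (\ref{eqsimple}) alone being absorbed into $\GL(n,k_0)$-conjugacy. In the converse the only genuine input is the PPV Galois correspondence --- equivalently, that the fixed field of the full PPV-group is $k$ --- which is precisely where differential closedness of $k_0$ is indispensable.
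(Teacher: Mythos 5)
Your proof is correct and takes essentially the route the paper intends: the paper itself gives no proof, only the citation to Cassidy--Singer and the preceding lemma identifying PV-extensions of the completed integrable system with PPV-extensions of $\d_0 Y=AY$, and your argument is exactly the standard fleshing-out of that lemma (constant monodromy matrices for a common fundamental solution in one direction, Galois descent of $A_i=(\d_iZ)Z^{-1}$ to $k$ plus cross-differentiation in the other). I see no gaps.
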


\subsection{Isomonodromy}

Let us again consider  the case of differential fields containing analytic functions. 
We consider as in Section \ref{pvst} a parameterized system  
\begin{eqnarray} \label{isomeq}
\d_x Y &=& A(x,t)Y\end{eqnarray}
where the entries of $A$ are  analytic on  $\calD\times \calU$ for some open subset $\calU\subset\CX^r$ containing $0$ and   some open subset $\calD$ of $\pp$ containing $0$ and such that $\pi_1(\calD,0)$ is generated by elementary loops $\gamma_1,\ldots,\gamma_m$. More precisely we assume that  $\pp\backslash \calD$ is the union of $m$ disjoint disks $D_i$ and that for each $t \in \calU$, Equation (\ref{isomeq})  
has a unique singular point $\alpha_i(t)$ in  each $D_i$, and no singular points otherwise.

\begin{definition} Equation (\ref{isomeq}) is {\em isomonodromic}  on $\calD\times \calU$ if there are constant matrices $M_1,\ldots,M_m\in\GL(n,\CX)$ such that for each fixed $t\in\calU$ there is a local fundamental solution $Y_t$ of (\ref{isomeq}) at $0$ such that analytic continuation $Y_t^{\gamma_i}$ of $Y_t$ along $\gamma_i$ yields
\[ Y_t^{\gamma_i}=Y_t M_i\]
for  $i=1,\ldots,m$.
\end{definition}

Note that $Y_t$ may  {\em a priori} not be analytic in $t$. Nevertheless, following a  proof by Andrey Bolibrukh in the Fuchsian case (\cf \cite{Bol_iso_def}), one can show the existence of such a solution $Y_t$ which is analytic in $t$, using in particular the fact that $\calU$ is a Stein variety, on which  any topological trivial (analytic) bundle is analytically trivial (\cf~\cite{Cartan}). 

\smallskip
A useful criterion for isomonodromy is the following. 
\begin{theorem}[Sibuya \cite{Sibuya}]
Consider an  equation (\ref{isomeq}) 
with notation and asumptions as above. 
\begin{enumerate}
\item Equation (\ref{isomeq}) is isomonodromic on $\calD\times \calU$ if and only it is completely integrable, that is, part of an integrable system

\begin{eqnarray*}\left\{ \begin{array}{ccc}
\d_0 Y &= &A_0Y\\
\d_1 Y&=&A_1Y \\
 &\vdots& \\
\d_r    Y   &=&A_rY\\
\end{array}
\right. 
\end{eqnarray*}
with $A_0=A$ and analytic  $A_i$  on $\calD\times \calU$ for all $i$. 

\smallskip
\item Assume (\ref{isomeq}) is isomonodromic. If moreover $A$ is rational in $x$ and Equation~(\ref{isomeq}) has parameterized regular singular points only, then  the entries of all  $A_i$ are rational in~$x$.
\end{enumerate}
\end{theorem}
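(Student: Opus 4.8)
The plan is to establish part~(1) in both directions and then deduce part~(2) from the integrable completion built in the proof of~(1). For ``completely integrable $\Rightarrow$ isomonodromic'': given an integrable completion $\{\d_xY=AY,\ \d_{t_j}Y=A_jY\}_{1\le j\le r}$ with the $A_j$ analytic on $\calD\times\calU$, the Frobenius integrability theorem produces a fundamental solution $Y(x,t)$ analytic in $(x,t)$ near $(0,0)$; for fixed $t$ the matrix $Y_t=Y(\cdot,t)$ solves (\ref{isomeq}), so its continuation along $\gamma_i$ satisfies $Y_t^{\gamma_i}=Y_tM_i(t)$ with $M_i(t)\in\GL(n,\CX)$, and it remains to check $M_i$ is independent of $t$. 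Continuation in $x$ commutes with $\d_{t_j}$ and $A_j$ is single-valued in $x$, so applying $\d_{t_j}$ to $Y^{\gamma_i}=YM_i$ and using $\d_{t_j}Y=A_jY$ gives $A_jYM_i=A_jYM_i+Y\,\d_{t_j}M_i$, whence $\d_{t_j}M_i=0$; thus the monodromy is constant and (\ref{isomeq}) is isomonodromic.

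For the converse I would start from constant monodromy matrices $M_i$ and, following Bolibrukh's argument in the Fuchsian case together with the fact that $\calU$ is Stein (so a topologically trivial analytic bundle is analytically trivial, \cite{Cartan}), take a local fundamental solution $Y=Y(x,t)$ at $0$ which is analytic in $(x,t)$ and still satisfies $Y^{\gamma_i}=YM_i$ with the \emph{same constant} $M_i$. Put $A_j:=(\d_{t_j}Y)Y^{-1}$. Continuing along $\gamma_i$ one finds $A_j^{\gamma_i}=(\d_{t_j}(YM_i))(YM_i)^{-1}=A_j+Y(\d_{t_j}M_i)M_i^{-1}Y^{-1}=A_j$ because $M_i$ is constant, so each $A_j$ is single-valued, hence analytic, on $\calD\times\calU$. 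Equating the mixed partials of $Y$ and substituting $\d_xY=AY$, $\d_{t_j}Y=A_jY$ then yields, after cancelling $Y$ on the right, exactly the relations $\d_xA_j-\d_{t_j}A=[A,A_j]$ and $\d_{t_i}A_j-\d_{t_j}A_i=[A_i,A_j]$; so $\{\d_xY=AY,\ \d_{t_j}Y=A_jY\}$ is an integrable completion of (\ref{isomeq}).

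For part~(2) I would work with this completion. Since $A$ is rational in $x$, for each fixed $t$ the singular set of (\ref{isomeq}) on $\pp$ is the finite set $\{\alpha_1(t),\ldots,\alpha_m(t)\}$, and off this set $Y(\cdot,t)$ is a (multivalued) holomorphic invertible matrix while $A_j(\cdot,t)$ is single-valued holomorphic by the previous step. At each $\alpha_i(t)$ the equation has a parameterized regular singular point; after an affine change of $x$, Proposition~\ref{modgrowth} gives a local solution $\hat Y_i$ of the form (\ref{sol}), so that near $\alpha_i(t)$ one has $Y=\hat Y_iC_i$ with $C_i=C_i(t)$ analytic and independent of $x$, and $\hat Y_i$, $\hat Y_i^{-1}$ and, by part~2) of that proposition, all $t$-derivatives of $\hat Y_i$ have (uniformly) moderate growth as $x\to\alpha_i(t)$. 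Hence $A_j=(\d_{t_j}Y)Y^{-1}$ has moderate growth at $\alpha_i(t)$, so the single-valued function $A_j(\cdot,t)$ has at worst a finite-order pole there, is meromorphic on $\pp$, and is therefore rational in $x$. Finally, since the $A_j$ are globally analytic on $\calD\times\calU$ and are rational in $x$ of locally bounded degree for $t$ near $0$, the rational form propagates to all of $\calU$ by analyticity of the coefficients.

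The hard part will be the parameter-dependence: passing from ``the monodromy is constant'' to ``there is an analytic-in-$t$ fundamental solution realizing that constant monodromy'' rests on the Stein property of $\calU$, and the rationality in~(2) rests squarely on the quantitative content of Proposition~\ref{modgrowth}, namely that a single exponent $N$ simultaneously bounds, near the moving singularity, the growth of every derivative of the solution of a fixed order in the parameters.
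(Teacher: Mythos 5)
The paper does not actually prove this statement: it is quoted as a theorem of Sibuya (\cite{Sibuya}) and used as a black box, so there is no in-text argument to compare yours against. Your proof is the standard one and is essentially sound. Both directions of part (1) are correct: Frobenius integrability plus the computation $\d_{t_j}M_i=0$ for one direction, and single-valuedness of $A_j:=(\d_{t_j}Y)Y^{-1}$ forced by constancy of the $M_i$ for the other. You correctly isolate the one genuinely nontrivial analytic input, namely the existence of a fundamental solution analytic in $t$ realizing the constant monodromy; the paper itself defers exactly this point to Bolibrukh's argument and the Stein property of $\calU$, so relying on it is consistent with the paper's own framework.

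Two places in part (2) deserve a little more care than you give them. First, Proposition~\ref{modgrowth}(2) controls the $t$-derivatives of the solution $Y$, not of $Y^{-1}$; the moderate growth of $\hat Y_i^{-1}$ has to be extracted separately from the explicit form (\ref{sol}), e.g.\ via $\det\bigl(\sum_i Q_i(x-\alpha)^i\bigr)=(x-\alpha)^k u$ with $u$ a unit (which follows from the Wronskian relation), so that the holomorphic factor inverts with at worst a pole. Second, your decomposition $A_j=(\d_{t_j}\hat Y_i)\hat Y_i^{-1}+\hat Y_i(\d_{t_j}C_i)C_i^{-1}\hat Y_i^{-1}$ requires the connection matrix $C_i(t)=\hat Y_i^{-1}Y$ to be analytic in $t$, which holds because both factors are, but should be said. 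Neither point is a gap in the strategy; both are routine to repair, and the concluding step (single-valued plus moderate growth at finitely many points implies rational in $x$, with pole orders uniformly bounded by the exponent $N$ so that the coefficients are analytic in $t$) is the right one.
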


 In \cite{CaSi} Cassidy and Singer give an algebraic criterion for isomonodromy using  PPV-theory. Let as before 
$\Delta=\{\d_x, \d_{t_1},\ldots,\d_{t_r}\}$ and  $\Delta_t
=\{\d_{t_1},\ldots,\d_{t_r}\}$  denote the partial differentiation with repect to $x$ and the multi-parameter $t$.
 
\begin{theorem}[Cassidy-Singer]
Consider an equation 
 \[\d_x Y = A(x,t)Y\] 
 as before, 
where $A$ has entries  analytic in $\calD\times\calU$, rational in $x$,   with parameterized regular singularities only, one in each disk $D_i$.  Let $k=C_0(x)$, where $C_0$ is a $\Delta_t$-differential closure of the field generated over $\CX(t_1,\ldots,t_r)$ by the $x$-coefficients of the entries (which are rational functions of $x$) of $A$ . This equation is isomonodromic if and only if its PPV-group over $k$ is conjugate in $\GL(n,C_0)$ to a linear algebraic subgroup of $\GL(n,\CX)$.
\end{theorem}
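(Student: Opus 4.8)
The plan is to deduce this from the Cassidy--Singer characterization of complete integrability (Proposition~\ref{cassidysinger}) together with Sibuya's theorem, which identifies isomonodromy with complete integrability in the present analytic setting. First I would observe that both hypotheses of the Cassidy--Singer proposition are met: the field $C_0$ is $\Delta_t$-differentially closed by construction, hence the field $k_0 = C_0$ of $\d_x$-constants of $k = C_0(x)$ is differentially closed, and $\CX(t_1,\ldots,t_r)\subset C_0$ plays the role of the field $C$ of $\Delta$-constants of $k$. Note that because $C_0$ contains $\CX$ and is $\Delta_t$-differentially closed, its subfield of $\Delta_t$-constants is an algebraically closed field containing $\CX$; in fact one arranges $C = \CX$ (or a suitable algebraically closed field of $\Delta_t$-constants) so that "linear algebraic subgroup of $\GL(n,\CX)$" is exactly "$G(C)$ for $G$ defined over $C$". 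With these identifications, Proposition~\ref{cassidysinger}(1) says precisely: the equation $\d_x Y = A(x,t)Y$ is completely integrable over $k$ if and only if its PPV-group over $k$ is conjugate in $\GL(n,C_0)$ to $G(C)$ for some linear algebraic group $G$ defined over the constants $C$.

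The second ingredient is Sibuya's theorem (part (1)), which states that Equation (\ref{isomeq}) is isomonodromic on $\calD\times\calU$ if and only if it is completely integrable, i.e.\ extends to an integrable system with analytic coefficients $A_i$ on $\calD\times\calU$. The one subtlety is to reconcile "completely integrable as an analytic system on $\calD\times\calU$" (Sibuya) with "completely integrable over the abstract $\Delta$-field $k = C_0(x)$" (Cassidy--Singer). In one direction this is immediate: an integrable analytic completion gives matrices $A_i$ rational in $x$ — this is where Sibuya's part (2) and the parameterized-regular-singularity hypothesis are essential, since they force the $A_i$ to be rational in $x$ and hence to lie in $\gl_n(k)$, yielding an integrable system over $k$. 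In the other direction, an integrable system over $k = C_0(x)$ has each $A_i$ rational in $x$ with coefficients in $C_0$; one then must descend from the abstract differential closure $C_0$ to genuine analytic functions on some domain in $t$-space, and this is exactly the role of Seidenberg's theorem (Theorem~\ref{seidenberg}), which realizes the finitely generated differential field generated by the relevant coefficients as a field of meromorphic functions on a subdomain, thereby producing an honest analytic integrable completion on some $\calD'\times\calU'$ — and isomonodromy on a subdomain propagates back to $\calD\times\calU$ by uniqueness of analytic continuation and connectedness.

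So the chain is: the equation is isomonodromic on $\calD\times\calU$ $\iff$ (Sibuya, using $prs_0$ and rationality in $x$) it is completely integrable over $k=C_0(x)$ $\iff$ (Cassidy--Singer, Proposition~\ref{cassidysinger}) its PPV-group over $k$ is conjugate in $\GL(n,C_0)$ to $G(C)$ for a linear algebraic group $G$ defined over the constants, i.e.\ to a linear algebraic subgroup of $\GL(n,\CX)$.

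The step I expect to be the main obstacle is the passage between the two notions of "completely integrable" — controlling the field of definition of the auxiliary matrices $A_i$. In the analytic-to-abstract direction one needs Sibuya's part (2) to guarantee the $A_i$ are rational in $x$ (arbitrary analytic dependence on $x$ would not land in $k$); in the abstract-to-analytic direction one needs Seidenberg's theorem to turn the abstract constants in $C_0$ into actual meromorphic functions of $t$ and then check that the resulting integrability identities $\d_i A_j - \d_j A_i = [A_i,A_j]$, being polynomial-differential identities valid in $C_0$, remain valid after this realization, so that the analytic completion is genuinely integrable on a subdomain. Once this dictionary between the two integrability notions is in place, the theorem follows by concatenating Sibuya and Cassidy--Singer as above.
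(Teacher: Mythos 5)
Your proposal follows exactly the route the paper indicates: it states only that ``the proof of this theorem relies on Sibuya's criterion and Proposition~\ref{cassidysinger}،'' which is precisely your chain of equivalences (isomonodromy $\iff$ complete integrability via Sibuya, then complete integrability $\iff$ conjugacy to constant points via Cassidy--Singer). Your additional care about reconciling the analytic and abstract notions of complete integrability --- using Sibuya's part (2) for rationality in $x$ and Seidenberg's theorem for the descent to genuine analytic coefficients --- correctly fills in the details the expository text leaves implicit.
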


The proof of this theorem relies on Sibuya's criterion and  Proposition \ref{cassidysinger}.

\section{Projective isomonodromy} 
Consider as before a parameterized equation  
\begin{eqnarray} \label{projis}
\d_x Y = A(x,t)Y
\end{eqnarray}
 on $\calD\times \calU$ with $m$ isolated singular points, each in a disk $D_i$ such that $\calD=\pp\setminus \cup_{i=1}^m D_i$. We are now considering a special case of so-called {\em monodromy evolving deformations}, which has been  studied on the classical example of the Darboux-Halphen equation by   Chakravarty and Ablowits \cite{ChAb} and Ohyama (\cite{ohyama}, \cite{ohyama2}). 
 
 \begin{definition} Equation (\ref{projis}) is {\em projectively isomonodromic} if there are constant matrices $\Gamma_1,\ldots \Gamma_m \in \GL(n,\CX)$ and analytic functions $c_1,\ldots,c_m \in \calOU$ such that for each fixed $t\in \calU$ there is locally at $0$ a fundamental solution $Y_t$ of (\ref{projis}) such that  for each $i$ the parameterized monodromy matrix of (\ref{projis}) with respect to $Y_t$ and the loop $\gamma_i$ is \[c_i(t)\Gamma_i.\]
 \end{definition} 
 As in the isomonodromic case, the solution $Y_t$ may not be analytic in $t$ and in \cite{MS} we  mimick Bolibrukh's proof to show the existence of  such a particular solution that is analytic in $t$. We need  such a solution to interpret projective isomonodromy algebraically in terms of PPV-theory. 
 
 \smallskip
 In the special  case  of a Fuchsian parameterized equation 
 
 \begin{eqnarray}\label{Fparam}
\d_x Y&=&\sum_ {i=1}^m \frac{A_i(t)}{x-\alpha_i(t)} Y
\end{eqnarray}
projective isomonodromy  is related to isomonodromy in a natural  way (\cf \cite{MS}).

 \begin{prop}\label{Fprop}
Equation (\ref{Fparam}) is projectively isomonodromic if and only if for each i
\[ A_i(t)=B_i(t)+b_i(t) I  \]
where $b_i$ and the entries of $B_i$ are analytic on $\calU$ and such that the equation
\[ \d_x Y=\sum_ {i=1}^m \frac{B_i(t)}{x-\alpha_i(t)} Y\]
is isomonodromic.
\end{prop}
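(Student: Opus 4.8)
The plan is to prove both implications by a single device, a scalar ``gauge'' transformation. I would introduce the (in general multivalued) function
\[ g(x,t)=\prod_{i=1}^m\bigl(x-\alpha_i(t)\bigr)^{\rho_i(t)} \]
for functions $\rho_i\in\calOU$ to be specified. For a branch fixed near $x=0$ this is a nonvanishing analytic function on a neighbourhood of $(0,0)$ in $\calD\times\calU$; its continuation along the elementary loop $\gamma_j$ multiplies it by $e^{2\pi i\rho_j(t)}$, because $\gamma_j$ has linking number $\delta_{ij}$ with the branch locus $x=\alpha_i(t)$ (the disks $D_i$ being disjoint and each $\alpha_i(t)$ confined to $D_i$); and $\d_x g/g=\sum_i\rho_i(t)/(x-\alpha_i(t))$. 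The elementary observation to exploit is then: if $Y$ solves $\d_xY=\bigl(\sum_i\frac{C_i(t)}{x-\alpha_i(t)}\bigr)Y$, then $gY$ solves the Fuchsian equation with residues $C_i+\rho_iI$, and $(gY)^{\gamma_i}=e^{2\pi i\rho_i(t)}\,(gY)\,M_i$ whenever $Y^{\gamma_i}=YM_i$. Thus $Y\mapsto g^{\pm1}Y$ trades a residue shift $\pm\rho_iI$ for the scalar twist $e^{\pm2\pi i\rho_i(t)}$ of the monodromy along $\gamma_i$.

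For $(\Leftarrow)$ I would assume $A_i=B_i+b_iI$ with the $B$-equation isomonodromic, say with constant monodromy matrices $\Gamma_i$ and, for each fixed $t$, a local fundamental solution $\hat Y_t$ at $0$ with $\hat Y_t^{\gamma_i}=\hat Y_t\Gamma_i$. Taking $\rho_i:=b_i$, the matrix $Y_t:=g\hat Y_t$ is a local fundamental solution of (\ref{Fparam}) with $Y_t^{\gamma_i}=c_i(t)\,Y_t\,\Gamma_i$, where $c_i:=e^{2\pi ib_i}\in\calOU$. This is precisely projective isomonodromy. For $(\Rightarrow)$ I would start from the data defining projective isomonodromy: matrices $\Gamma_i\in\GL(n,\CX)$, multipliers $c_i\in\calOU$ (nonvanishing, since $c_i(t)\Gamma_i$ is invertible), and for each fixed $t$ a local fundamental solution $Y_t$ at $0$ with $Y_t^{\gamma_i}=c_i(t)\,Y_t\,\Gamma_i$. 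After shrinking $\calU$ to a simply connected neighbourhood of $0$, set $\rho_i:=\frac{1}{2\pi i}\log c_i\in\calOU$ and $g=\prod_i(x-\alpha_i)^{\rho_i}$. Then $\hat Y_t:=g^{-1}Y_t$ is a local fundamental solution of the Fuchsian equation with residues $B_i:=A_i-\rho_iI$, and $\hat Y_t^{\gamma_i}=\hat Y_t\Gamma_i$; hence that equation is isomonodromic, and with $b_i:=\rho_i$ one obtains the required decomposition $A_i=B_i+b_iI$ with $b_i$ and the entries of $B_i$ analytic.

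I do not expect a genuine obstacle; the delicate points are all of a bookkeeping nature. The first is making the branch and monodromy assertions about $g$ precise, which is exactly where the standing geometric hypotheses enter (disjoint $D_i$, $\alpha_i(t)\in D_i$, $\gamma_i$ elementary, so that the relevant linking numbers are $\delta_{ij}$). The second is that in $(\Rightarrow)$ the $\rho_i$ are analytic only after possibly shrinking $\calU$ around $0$; this is harmless, since---just as for the solution $Y_t$ in Bolibrukh's argument for the isomonodromic case (\cf \cite{Bol_iso_def}, \cite{MS})---the definitions of isomonodromy and of projective isomonodromy only require the fundamental solutions $Y_t$, $\hat Y_t$ to exist for each fixed $t$, with no joint analyticity in $t$ demanded. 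Finally, one could worry about $x=\infty$, but nothing has to be checked there: the monodromy at infinity is the (orientation-dependent) inverse product of the monodromies at the $\alpha_i$, so under $Y\mapsto g^{\pm1}Y$ it is twisted by $\prod_i e^{\pm2\pi i\rho_i}$, consistently with the rest of the computation.
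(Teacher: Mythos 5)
Your proof is correct, and the scalar gauge $g=\prod_i(x-\alpha_i(t))^{\rho_i(t)}$, trading the residue shifts $\rho_i(t)I$ for the monodromy twists $e^{2\pi i\rho_i(t)}$, is exactly the mechanism behind the statement: the present article only cites \cite{MS} for the proof, but the same device is visible in its Darboux--Halphen computation, where $A_i=\lambda_i C+b_i(t)I$ and $M_i(t)=c_i(t)M_i(t_0)$ with $c_i$ the exponential of a scalar. The only delicate points --- choosing a branch of $\log c_i$ after shrinking $\calU$, the fact that the fixed-$t$ definitions of (projective) isomonodromy require no joint analyticity of $Y_t$, and the constant scalar twist induced at $\infty$ --- are ones you already address.
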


For general equations (\ref{projis}) with parameterized regular singularities we give in \cite{MS} an algebraic characterization of projective isomonodromy in terms of their PPV-group.
\begin{theorem}\label{projisomPPV}
With notation as before, consider a parameterized equation 
\begin{eqnarray}\label{projis2}
\d_x Y = A(x,t)Y
\end{eqnarray}
 where $A$ has entries  analytic in $\calD\times\calU$, rational in $x$, and assume that this equation has parameterized regular singularities only, one in each disk $D_i$.  Let $k=k_0(x)$, where $k_0$ is a $\Delta_t$-differential closure of the field generated over $\CX(t_1,\ldots,t_r)$ by the $x$-coefficients of the rational functions entries of $A$ . Then this equation is projectively isomonodromic if and only if its PPV-group over $k$ is conjugate in $\GL(n,k_0)$ to a subgroup of 
\[\GL(n,\CX) \cdot k_0 I\subset\GL(n,k_0)\]
where $k_0 I$ is  the subgroup  of scalar matrices of $\GL(n,k_0)$.
\end{theorem}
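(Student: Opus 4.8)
The plan is to derive both implications from the behaviour of the parameterized monodromy matrices inside the PPV-group, the link being the elementary observation that
\[ \calB\ :=\ \GL(n,\CX)\cdot k_0 I\ =\ \{\,M\in\GL(n,k_0)\ :\ M^{-1}\d_{t_j}M\ \text{is a scalar matrix},\ j=1,\dots,r\,\} \]
is a Kolchin-closed subgroup of $\GL(n,k_0)$ defined over $\QX$. The inclusion ``$\subseteq$'' is immediate; for ``$\supseteq$'', writing $\beta_jI=M^{-1}\d_{t_j}M$, the flatness identity for the logarithmic derivative makes the system $\d_{t_j}\mu=\beta_j\mu$ integrable, so it has a solution $\mu$ in the $\Delta_t$-differentially closed field $k_0$, and then $M\mu^{-1}$ is $\Delta_t$-constant. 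The same description over any $\Delta_t$-differentially closed overfield $C$ of $k_0$ shows that the $C$-points of $\calB$ are exactly $\GL(n,\CX)\cdot C\,I$.

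\emph{The ``if'' part.} After conjugating the fundamental solution $Z_0$ by an element of $\GL(n,k_0)$ we may assume the PPV-group $G$ of (\ref{projis2}) over $k(x)=k_0(x)$ is contained in $\calB$. Choose a $\Delta_t$-differentially closed field $C_1$ containing $k_0$ and the entries of the parameterized monodromy matrices $M_i$ of (\ref{projis2}) with respect to $Z_0$. Since $\calB$ is defined over $\QX$, from $G\subseteq\calB$ we obtain $G(C_1)\subseteq\GL(n,\CX)\cdot C_1 I$, while Theorem~\ref{mongal} places the $M_i$ in $G(C_1)$. Hence $M_i=c_i\Gamma_i$ with $\Gamma_i\in\GL(n,\CX)$ (necessarily $t$-independent) and $c_i\in C_1^*$; expressing $c_i$ as the ratio of an entry of $M_i$ to the corresponding nonzero entry of $\Gamma_i$ shows $c_i$ is analytic near $0$. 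Realising $Z_0$ as a function analytic in $t$ near $0$ by Seidenberg's theorem~\ref{seidenberg}, this shows (\ref{projis2}) is projectively isomonodromic.

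\emph{The ``only if'' part.} Suppose (\ref{projis2}) is projectively isomonodromic and, mimicking Bolibrukh's argument as in \cite{MS}, fix a fundamental solution $Z_0$ analytic in $t$ near $0$ whose monodromy matrix along $\gamma_i$ is $c_i(t)\Gamma_i$ with $\Gamma_i\in\GL(n,\CX)$ constant and $c_i\in\calOU$. Let $K$ be a $\Delta_t$-differentially closed field containing $k_0$, the singularities $\alpha_i(t)$ and the entries of the $c_i(t)\Gamma_i$. The parameterized Schlesinger theorem~\ref{schlesinger} — applicable because (\ref{projis2}) is rational in $x$ with parameterized regular singularities — shows that the $c_i(t)\Gamma_i$ generate a Kolchin-dense subgroup of the PPV-group of (\ref{projis2}) over $K(x)$; since each $c_i(t)\Gamma_i$ lies in the Kolchin-closed subgroup $\GL(n,\CX)\cdot K I$, that PPV-group is contained in $\GL(n,\CX)\cdot KI$. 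By Proposition~\ref{letprop2} this PPV-group is conjugate in $\GL(n,K)$ to $G(K)$, where $G$ is the PPV-group of (\ref{projis2}) over $k_0(x)$, a differential algebraic group defined over $k_0$. Consequently the transporter $T$, the Kolchin-closed set of matrices $Q$ with $QGQ^{-1}\subseteq\calB$, is defined over $k_0$ (because $G$ is defined over $k_0$ and $\calB$ over $\QX$) and has a point over $K$; since $k_0$ is $\Delta_t$-differentially closed, $T$ has a point $Q_0$ over $k_0$, and $Q_0GQ_0^{-1}\subseteq\GL(n,\CX)\cdot k_0I$, as claimed.

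I expect the main obstacle to be this descent to $k_0$: projective isomonodromy, and with it the fact that the monodromy matrices lie in $\calB$, becomes visible only after enlarging the base field to accommodate the (typically transcendental) functions $\alpha_i(t)$ and $c_i(t)$, whereas the theorem asks for a conjugating matrix over $k_0$ itself. Proposition~\ref{letprop2} reduces this to the solvability over $k_0$ of the single Kolchin-closed, $k_0$-defined condition cutting out the transporter, which differential closedness of $k_0$ settles; the facts that must be secured for this to run are that $\GL(n,\CX)\cdot k_0I$ is Kolchin-closed and $\QX$-defined, and that projective isomonodromy can be witnessed by a fundamental solution analytic in $t$.
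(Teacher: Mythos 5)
The paper itself states this theorem without proof, deferring to \cite{MS}, and your argument correctly assembles exactly the ingredients the paper lines up for that purpose: the identification of $\GL(n,\CX)\cdot k_0 I$ as the Kolchin-closed, $\QX$-defined locus where the $\Delta_t$-logarithmic derivative is scalar, Theorem~\ref{mongal} for the ``if'' direction, and the Bolibrukh-style $t$-analytic fundamental solution together with Theorem~\ref{schlesinger}, Proposition~\ref{letprop2} and differential closedness of $k_0$ for the descent in the ``only if'' direction. This is essentially the proof of \cite{MS}, so the proposal is correct and takes the same route, up to the usual sketch-level care about shrinking the parameter domain when invoking Seidenberg's theorem.
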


Combining  topological arguments in both the Kolchin and the Zariski topology, and using Schur's lemma we get a corollary of this result for {\em absolutely irreducible} equations over $k$, that is, equations that are irreducible over any finite extension of~$k$. We recall that an equation is said to be irreducible if  the corresponding differential polynomial is irreducible (it has no factor of strictly less order), equivalently if its differential  Galois group acts irreducibly on its  solution space in any Picard-Vessiot extension. 
\begin{cor}
Let $A,k_0$ and $k$ be as in  Theorem \ref{projisomPPV}. If  Equation (\ref{projis2}) is absolutely irreducible, then it is projectively isomonodromic if and only if the commutator subgroup $(G,G)$ of its PPV-group $G$ is conjugate in  $\GL(n,k_0)$ to a subgroup of $\GL(n,\CX)$.
\end{cor}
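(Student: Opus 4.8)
The plan is to read off both implications from Theorem~\ref{projisomPPV}, which characterizes projective isomonodromy of Equation~(\ref{projis2}) by the condition that $G$ be conjugate in $\GL(n,k_0)$ to a subgroup of $\GL(n,\CX)\cdot k_0 I$. The \emph{direct} implication then needs no irreducibility: assuming the equation is projectively isomonodromic, pick by Theorem~\ref{projisomPPV} an element $P\in\GL(n,k_0)$ with $P^{-1}GP\subseteq\GL(n,\CX)\cdot k_0 I$; since the scalar matrices $k_0 I$ are central they cancel in every commutator, $[h_1\lambda_1,h_2\lambda_2]=[h_1,h_2]\in\GL(n,\CX)$ for $h_j\in\GL(n,\CX)$ and $\lambda_j\in k_0^*$, so $P^{-1}(G,G)P=(P^{-1}GP,P^{-1}GP)\subseteq\GL(n,\CX)$ and $(G,G)$ is conjugate in $\GL(n,k_0)$ to a subgroup of $\GL(n,\CX)$.

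For the \emph{converse} assume, after a conjugation, that $N:=(G,G)\subseteq\GL(n,\CX)$; by Theorem~\ref{projisomPPV} it suffices to produce $P$ with $P^{-1}GP\subseteq\GL(n,\CX)\cdot k_0 I$. I would use the logarithmic-derivative cocycles $\ell_i(g):=(\d_{t_i}g)g^{-1}$ on $G$, which satisfy $\ell_i(gh)=\ell_i(g)+g\,\ell_i(h)\,g^{-1}$ and vanish on $N$ (its entries lie in $\CX\subseteq\ker\d_{t_i}|_{k_0}$). Since $N$ is normal in $G$, applying the cocycle identity to $g\nu g^{-1}\in N$ together with $\ell_i(g\nu g^{-1})=0$ shows that $\ell_i(g)$ commutes with $g\nu g^{-1}$ for all $\nu\in N$, hence with all of $N$; thus $\ell_i(g)\in C_{\gl_n(k_0)}(N)$ for every $g$ and $i$. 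If $N$ acts irreducibly on $k_0^n$, Schur's lemma gives $C_{\gl_n(k_0)}(N)=k_0 I$, so $\ell_i(g)=c_{i,g}I$; taking a nonzero entry $g_{pq}$ of $g$ forces $c_{i,g}=\d_{t_i}(g_{pq})/g_{pq}$ for all $i$, so $g_{pq}^{-1}g$ is $\d_{t_i}$-constant for every $i$, i.e.\ lies in $\GL(n,\CX)$ (the $\Delta_t$-constants of $k_0$ being $\CX$). Hence $G\subseteq\GL(n,\CX)\cdot k_0 I$ and we conclude with $P=I$.

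The real obstacle is that $N=(G,G)$ need not act irreducibly. To handle this I would pass to the Zariski closure $\widehat G$ of $G$, which is the PV-group of the equation over $k$ and, because the equation is absolutely irreducible, acts absolutely irreducibly on $k_0^n$, hence is reductive; its derived group $\widehat G'=(\widehat G,\widehat G)$ is then semisimple and contains $N$ as a Zariski-dense subgroup. As $N\subseteq\GL(n,\CX)$, its isotypic decomposition of $k_0^n$ is defined over $\CX$, and since $G$ normalizes $N$ and acts irreducibly it permutes the isotypic components transitively; this reduces matters to the isotypic case, in which the $G$-action is, modulo the central scalar torus of $\widehat G$, a tensor product of a projective representation through which $N$ factors and an unconstrained complementary representation. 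One then invokes the description of Zariski-dense differential algebraic subgroups of semisimple algebraic groups (Cassidy, \cite{cassidy}) together with Schur's lemma to choose $P$, over an appropriate field, so that $P^{-1}NP$, and then $P^{-1}GP$, become constant up to scalars. The delicate point, and where the combination of Kolchin and Zariski topology is essential, is exactly this last passage: a conjugation that ``constantizes'' $(G,G)$ must be promoted to one that constantizes $G$ modulo scalar matrices, the Zariski topology controlling the reductive structure and the use of Schur's lemma while the Kolchin topology controls being defined over the constant field $\CX$.
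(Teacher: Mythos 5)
Your first implication is correct (and, as you note, needs no irreducibility), and your cocycle-plus-Schur mechanism for the converse is exactly the right engine: the identity $\ell_i(g\nu g^{-1})=\ell_i(g)-(g\nu g^{-1})\ell_i(g)(g\nu g^{-1})^{-1}$ does put $\ell_i(g)$ in $C_{\gl_n(k_0)}(N)$ for $N=(G,G)$ normal and constant, and when $N$ acts irreducibly Schur's lemma finishes the argument as you say. (The paper itself defers the proof to \cite{MS}, describing it only as a combination of Kolchin/Zariski topological arguments and Schur's lemma, so I am comparing against that intended argument.) The gap is in how you dispose of the case where $N$ does \emph{not} act irreducibly: your last paragraph is a programme, not a proof. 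The decisive step --- promoting a conjugation that constantizes $N$ to one that constantizes $G$ modulo scalars, via isotypic decompositions, tensor factorizations and Cassidy's classification --- is only asserted, and your own framing shows why it is delicate: for a merely irreducible $G$ the derived group of the Zariski closure can genuinely be reducible (e.g.\ the normalizer of a torus in $\GL_2$ is irreducible and reductive, but its derived group is the diagonal torus), which is precisely the configuration your sketch would have to tame.

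What you are missing is that the hypothesis is \emph{absolute} irreducibility, which is strictly stronger than irreducibility of $G$ and makes the problematic case vacuous for $n\ge 2$. Irreducibility over the finite extension $K^{G^0}$ of $k$ implies, by the Galois correspondence, that the Kolchin identity component $G^0$ acts irreducibly; hence $\overline{G^0}=(\overline{G})^0$ is a \emph{connected} algebraic group acting irreducibly, so its unipotent radical acts trivially and it is reductive, its radical is a central torus acting by scalars (Schur), and $S:=(\overline{G^0},\overline{G^0})$ is semisimple and still acts irreducibly when $n\ge 2$ (a group of scalars is irreducible only for $n=1$). Since $\overline{(G^0,G^0)}=S$ and any subgroup Zariski-dense in an irreducibly acting group acts irreducibly, $(G^0,G^0)\subseteq N$ already acts irreducibly on $k_0^n$. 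Thus after the conjugation placing $N$ in $\GL(n,\CX)$, your Schur argument applies verbatim: $C_{\gl_n(k_0)}(N)=k_0I$, so $\ell_i(g)\in k_0I$ for all $g\in G$, giving $G\subseteq\GL(n,\CX)\cdot k_0I$ and, by Theorem \ref{projisomPPV}, projective isomonodromy; the case $n=1$ is trivial. With this replacement of your final paragraph the proof is complete and agrees in substance with the one in \cite{MS}.
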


\section{The Darboux-Halphen equation} The  results of the previous section are well illustrated on the Darboux-Halphen equation. This equation describes projective isomonodromy in the same way as the Schlesinger equation accounts for  isomonodromy (of the Schlesinger type) for parameterized Fuchsian systems. 
The  Darboux-Halphen V equation
 
\begin{eqnarray*}
{\mathrm{ (DH\  V)}}\ \ \left\{ \begin{array}{cccccccc}
\omega_1' &= &&\omega_2\omega_3 &-&\omega_1(\omega_2+\omega_3)&+& \phi^2 \\
\omega_2' &=&& \omega_3\omega_1 &-&\omega_2(\omega_3+\omega_1)&+ &\theta^2 \\
\omega_3' &=&& \omega_1\omega_2 &-&\omega_3(\omega_1+\omega_2)&- &\theta\phi \\
\phi'         &=&&\omega_1(\theta - \phi) &-&\omega_3(\theta + \phi)&&\\
\theta'      &=& - &\omega_2(\theta- \phi) &-&\omega_3(\theta + \phi),&&\\
\end{array}
\right. 
\end{eqnarray*}
occurs in  physics as a reduction of the selfdual Yang-Mills equation (SDYM).
For~$\theta=\phi$,  (DH V) is equivalent to Einstein's selfdual vacuum equations.
For~$\theta=\phi=0$, it is  Halphen's original equation (H II), solving a  geometry problem of Darboux about orthogonal surfaces.

Contrary to other SDYM reductions such as the Painlev\'e equations,    (DH V) does not satisfy the Painlev\'e property, since it has a boundary of movable essential singularities. It is therefore  
not likely to rule isomonodromy.

  \subsection{History of the DH-equation.}
Halphen's equation (H II) goes back to Darboux's work (\cite{GD1}, \cite{GD2}) on orthogonal systems of surfaces.
Darboux's original problem  was the following.

\smallskip
\noindent{\em Problem 1}: What condition on a given pair $(\calF_1,\calF_2)$ of orthogonal families of surfaces  in $\RX^3$ implies that there exists a family $\calF_3$ such that $(\calF_1,\calF_2,\calF_3)$ is a   triorthogonal system  of  pairwise orthogonal families?

\smallskip
In \cite{GD1} Darboux gives a necessary and sufficient condition on  $(\calF_1,\calF_2)$ to solve the problem:  that the intersection of any  surfaces $S_1\in\calF_1$ and $S_2\in\calF_2$ be a curvature line of both $\calF_1$ and $\calF_2$. The necessary condition  was already known as Dupin's theorem (1813). 
 
 \smallskip
\noindent{\em Problem 2}: What condition on its parameter $u = \varphi(x,y,z)$ implies that a one-parameter family $\calF$ of surfaces in $\RX^3$ belongs to a {\em triorthogonal} system $(\calF_1,\calF_2,\calF_3)$, of three pairwise orthogonal families?

\smallskip
In \cite{GD2} Darboux found and solved an  order three partial differential equation satisfied by  $u$  and   obtained, based on previous work by Bonnet and Cayley,  the general solution from a particular family of ruled helicoidal surfaces. 
\'Elie Cartan~\cite{ElieCartan} later used  his  exterior differential calculus to prove that Problem 2 has a solution. He also generalized the
problem, replacing orthogonality by any prescribed angle, or considering $p$ pairwise orthogonal families of hypersurfaces in $p$-space.

\smallskip
Darboux  stated yet another problem on orthogonal surfaces. 

\smallskip
\noindent{\em Problem 3}: given two families $\calF_1$ and $\calF_2$ consisting each of parallel surfaces does there exist a family $\calF$ orthogonal to both $\calF_1$ and $\calF_2$ ?

\smallskip
It is an easy exercise  to prove that a solution  should either consist of planes, or of ruled quadrics.
If $\calF$ consists of quadrics {\em with a center}, these have simultaneously reduced equations:
\[\frac{x^2}{a(u)} + \frac{y^2}{b(u)}+\frac{z^2}{c(u)}=1\]
which depend on the parameter $u=\varphi(x,y,z)$ of $\calF$. One can show that $\calF$ solves Problem 3 if and only if $a,b,c$ satisfy the {\em Darboux equation}

\begin{eqnarray*}\label{D}
a(b'+c')=b(c'+a')=c(a'+b')
\end{eqnarray*}
where $a',b',c'$ are the derivatives with respect to $u$.
Darboux  could not solve the problem though:

\noindent{`\em These equations do not seem to be integrable by known procedures' }(Darboux,1878).

He  gave up on this part of the problem and restricted his study to centerless quadrics. He solved the particular problem with  a
family $\calF$  of paraboloids
\[ \frac{y^2}{\alpha+u}+\frac{z^2}{\alpha -u }=2x+\alpha \log u\] 
and  claimed that  some surfaces of revolution solved the problem as well.

\smallskip
In 1881 Halphen (\cite{H1}, \cite{H2}) completely solved Darboux's second problem in the following form:

\begin{eqnarray*}
{\mathrm{ (H\  I)}}\ \ \left\{ \begin{array}{ccccc}
\omega_1' &+&\omega_2'&=&\omega_1\omega_2  \\
\omega_2' &+&\omega_3'&=&\omega_2\omega_3  \\
\omega_3' &+&\omega_1'&=&\omega_3\omega_1  \\
\end{array}
\right. 
\end{eqnarray*}
known as the {\em Halphen I} equation, 
and  actually solved the more general QHDS (quadratic homogeneous differential system)
\begin{eqnarray*}
{\mathrm{ (H\  II)}}\ \ \left\{ \begin{array}{cccc}
\omega_1' &= &a_1\omega_1^2+(\lambda-a_1)(\omega_1\omega_2+\omega_3\omega_1- \omega_2\omega_3) \\
\omega_2' &= &a_2\omega_2^2+(\lambda-a_2)(\omega_2\omega_3+\omega_1\omega_2- \omega_3\omega_1) \\
\omega_3' &= &a_3\omega_3^2+(\lambda-a_3)(\omega_3\omega_1+\omega_2\omega_3- \omega_1\omega_2) \\
\end{array}
\right. 
\end{eqnarray*}
known as the {\em Halphen II equation},   by means of  hypergeometric functions.
He  considered  even more general QHDSs   
 \[ \{ \omega_r'=\psi_r(\omega_1,\ldots,\omega_l) \}_{ r=1,\ldots,l}\] 
where the
$\psi_r$ are quadratic forms, with some extra symmetry condition. A special example of such QHDS is  Equation ~(DH V)  above, and its particular form~(H II) which we consider now. 

\subsection{Application of PPV-theory to the Darboux-Halphen}
\smallskip
As shown in \cite{ohyama}, Equation  (H II) is equivalent to a system
\[x_i'=Q_i(x_1,x_2,x_3), \ \  i=1,2,3, \]
where $Q_i(x_1,x_2,x_3)=x_i^2+a(x_1-x_2)^2+b(x_2-x_3)^2+c(x_3-x_1)^2$ for  some constants $a,b,c$.

Equation (H II)  is in fact the integrability condition of the Lax pair 
 \begin{equation}\label{eqn3}
{\frac{\d Y}{\d x}}  =  \left( 
{\frac{\mu I}{(x-x_1)(x-x_2)(x-x_3)}}+\sum_{i=1}^3{\frac{\lambda_iC}{x-x_i}} \right)Y 
\end{equation}
\begin{equation} \label{eqn4}
{\frac{\d Y}{\d t}} =  \left(\nu I+\sum_{i=1}^3 \lambda_ix_iC\right)Y-Q(x) 
{\frac{\partial Y}{\partial x} } 
\end{equation}
where
\[ Q(x)=x^2+a(x_1-x_2)^2+b(x_2-x_3)^2+c(x_3-x_1)^2\]
and where $x_i=x_i(t)$ are parameterized (simple) singularities, $C$ is a constant traceless $2\times 2$ matrix, $I$  is the identity  matrix, 
$\mu\ne 0$ and $\lambda_i$   are constants such that  $\lambda_1+\lambda_2+\lambda_3=0$ (there is hence no singular point at $\infty$), and the function $\nu$ is a solution of 

\begin{eqnarray*}\label{eqn5}
{\frac{\d \nu}{\d x}}=-\mu {\frac{x+x_1+x_2+x_3}{(x-x_1)(x-x_2)(x-x_3)}}.
\end{eqnarray*}

Note that since the solutions of the latter equation are not rational in $x$, 
Equation~(\ref{eqn3})  is  not isomonodromic, by Sibuya's criterion. To describe the monodromy of this equation, let us fix  a fundamental solution $Y$ of the Lax pair at some $x_0$ not belonging to fixed disks $D_i$ with centers  $x_i(t)$, for all $i$.  Note that $Y$ must be analytic in both $x$ and $t$. A computation shows that the parameterized monodromy  matrix of Equation~(\ref{eqn3}) with respect to $Y$ and $x_i(t)$ is 
\[M_i(t)=e^{-2\pi \sqrt{-1} \mu \int_{t_0}^t\beta_i(t)dt}\ e^{2\pi \sqrt{-1} L_i(t_0)}\]
where $L_i(t)$ is an analytic function of $t$ such that, for some fundamental solution $Y_0$ of  Equation~(\ref{eqn3}) in the neighbourhood of given non-singular point $x_0$, the analytic extension of $Y_0$  to a neighbourhood of $x_i(t)$ is
\[{Y(t,x)=Y_i(t,x-x_i(t)).(x-x_i(t))^{L_i(t)}}\]
 where $Y_i$ is single-valued.  The coefficients $\beta_i(t)$ are given by 
\[\frac{x+\sum_{i=1}^3x_i}{\prod_{i=1}^3(x-x_i(t))}=\sum_{i=1}^3 \frac{\beta_i(t)}{x-x_i(t)}.\]
The  monodromy matrix  is for each $i$   of the form
\[M_i(t)=c_i(t)\ M_i(t_0)\]
with 
\[ \quad  c_i(t)=e^{-2\pi \sqrt{-1} \mu \int_{t_0}^t\beta_i(t)dt},\quad M_i(t_0)=e^{2\pi \sqrt{-1} L_i(t_0)},\] 
that is,  Equation~(\ref{eqn3}) is projectively isomonodromic. Moreover it is an example of a Fuchsian projectively isomonodromic equation to  which Proposition \ref{Fprop} applies, since we can write this equation

 \begin{eqnarray*} \frac{\partial Y}{\partial x} = \left(\sum_{i=1}^3 \frac{A_i(t)}{(x-x_i)}\right) Y
\end{eqnarray*}
where
\begin{eqnarray*}
A_i(t) & = & B_i(t) + b_i(t)I
\end{eqnarray*}
\[B_i(t)  =  \lambda_i C, \quad
b_i(t) = \frac{\mu}{\prod_{j\neq i}(x_i-x_j)}\]
and where
 \begin{eqnarray*} \frac{\partial Y}{\partial x} = \left(\sum_{i=1}^3 \frac{\lambda_i C}{(x-x_i)}\right) Y
\end{eqnarray*} 
 is clearly isomonodromic.

\section{Inverse problems}
\subsection{A parameterized version of the weak Riemann-Hilbert problem}
In \cite{MS0} we adapted Bolibrukh's techniques and  construction of holomorpic bundles (\cf \cite{anosov_bolibruch}, \cite{Bo1}, \cite{Bo2}, \cite{Bo3}, \cite{Bo3b}, \cite{BoMaMi}) to give a parameterized version of the weak Riemann-Hilbert problem. 

\begin{theorem}\label{weakRH} Let $S=\{a_1, \ldots a_s\}$ be a finite subset of $\PX^1(\CX)$ and $D$ an open  polydisk  in $\CX^r$. Let $\gamma_1, \ldots , \gamma_s$ be generators of $\pi_1(\PX^1(\CX)\backslash S ; a_0)$ for some fixed base-point $a_0\in\PX^1(\CX)\backslash S$,  and  let $M_i:D\rightarrow \GL_n(\CX)$,   $i =1 , \ldots , s$, be analytic maps with $M_1\cdot\ldots\cdot M_s=I_n$. There exists a parameterized linear differential system 
 \begin{eqnarray*}\label{eqnRH2}
\d_x Y &= &A(x,t)Y
 \end{eqnarray*}
with $A\in\gl_n(\calO_{D'}(x))$ for some open polydisk $D' \subset D$,  with only regular singular points, all in $S$,  such that for some parameterized fundamental solution, the parameterized monodromy matrix along each $\gamma_i$ is  $M_i$. Furthermore, given any $a_i \in \{a_1, \ldots, a_s\}$, the entries of $A$ may be chosen to have at worst  simple poles at all $a_j \neq a_i$.
\end{theorem}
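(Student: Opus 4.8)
The plan is to follow Bolibrukh's strategy for the classical weak Riemann--Hilbert problem, but carrying the parameter $t$ along at every step and checking that all constructions can be made to depend analytically on $t\in D'$ for a suitable subpolydisk $D'\subset D$. First I would pass from the combinatorial data $(M_1,\dots,M_s)$ with $M_1\cdots M_s=I_n$ to a parameterized \emph{representation} $\rho_t\colon\pi_1(\PX^1(\CX)\backslash S;a_0)\to\GL_n(\CX)$ depending analytically on $t$, and use it to build a locally constant sheaf, hence a flat holomorphic vector bundle $E_t$ with connection over $(\PX^1(\CX)\backslash S)\times D'$. The construction must be uniform in $t$: since $D'$ is a polydisk and hence Stein and contractible, a topologically trivial analytic bundle on a product $(\PX^1\backslash S)\times D'$ is analytically trivial (this is where the Stein/Cartan theory, invoked already for isomonodromy, enters), which lets me trivialize the horizontal part of the bundle over the complement of the singular points and obtain a system $\d_x Y = A(x,t)Y$ with $A$ meromorphic in $x$, analytic in $t$, and regular singular points exactly at $S$.

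Next I would address the \emph{pole orders}. As in the classical case, the naive trivialization produces a system with possibly high-order (apparent) poles at the $a_j$, because the Birkhoff--Grothendieck splitting type of the extended bundle may be nontrivial. The remedy is Bolibrukh's: choose, in the neighborhood of each $a_j\neq a_i$, a suitable local holomorphic gauge (built from the exponents of the prescribed local monodromy $M_j$, i.e.\ from a logarithm of $M_j$) so that the extension of the bundle across $a_j$ is \emph{Fuchsian}, giving a simple pole there; at the single distinguished point $a_i$ one is allowed to absorb the remaining ``defect'' of the bundle, producing a possibly higher-order pole at $a_i$ only. Concretely this is the parameterized analogue of gluing a Fuchsian local model at each $a_j$ ($j\neq i$) to the flat bundle on the complement, and using the freedom at $a_i$ to make the global bundle (on $\PX^1$) isomorphic to one admitting a global meromorphic section nonvanishing away from $a_i$. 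The branches $\log M_j(t)$ can be chosen analytically in $t$ on a small enough polydisk $D'$ (shrink $D$ so that the eigenvalues of each $M_j(t)$ stay in a simply connected region avoiding the relevant branch cuts, which is possible since $M_j(0)$ has finitely many eigenvalues), so all local models vary analytically.

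The parameterized monodromy of the resulting system is, by construction, the prescribed $M_i$ along each $\gamma_i$ with respect to the parameterized fundamental solution coming from the flat structure, because the connection we built has exactly $\rho_t$ as its monodromy representation and the gauge transformations used near the $a_j$ are single-valued, hence do not alter the monodromy. The regular-singularity claim follows from the local Fuchsian (resp.\ controlled) models at the $a_j$ (resp.\ at $a_i$), where moderate growth holds by Proposition~\ref{modgrowth} (in the non-moving-singularity case, $\alpha_i(t)\equiv a_i$).

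The main obstacle I expect is making Bolibrukh's bundle-theoretic construction \emph{uniform in the parameter}: one must ensure that the holomorphic type of the family of bundles over $\PX^1\times D'$ does not jump as $t$ varies (semicontinuity of the splitting type could force the pole structure to degenerate on a proper analytic subset of $D$), and this is precisely why one restricts to a small polydisk $D'$ — after shrinking, the $M_j(t)$ are close to $M_j(0)$, their logarithms and the associated local models are well defined and analytic, and the extended bundle has constant (generic) type. A secondary technical point is the bookkeeping at the distinguished point $a_i$: one has to verify that the bundle-theoretic ``defect'' absorbed there produces only a pole (no essential singularity, no extra monodromy) and that this, too, is analytic in $t$ — again guaranteed once $D'$ is chosen small. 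Everything else (constructing $\rho_t$, the flat bundle, the trivialization via Cartan's theorem, reading off the monodromy, checking regular singularity via Proposition~\ref{modgrowth}) is a parameterized transcription of the standard argument.
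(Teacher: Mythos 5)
Your proposal follows essentially the same route the paper indicates: transcribe Bolibrukh's bundle-theoretic construction with the parameter carried along (analytic logarithms of the $M_j(t)$ giving Fuchsian local models, a flat bundle from $\rho_t$, and the defect absorbed at the distinguished point $a_i$), with the key input being a parameterized Birkhoff--Grothendieck theorem valid after shrinking to a subpolydisk $D'$ — exactly the ingredient the paper cites (Malgrange, Bolibrukh). The argument is correct as outlined, so no further comparison is needed.
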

The proof, as in the non-parameterized case,  here relies  on a parameterized version of the Birkhoff-Grothendieck theorem 
({\it cf.} \cite{ma80a}, Proposition 4.1;   \cite{bolibruch02}, Theorem 2;  \cite{bolibruch04}, Theorem A.1).

\subsection{ The inverse problem of PPV-theory} In analogy again with the non-pa\-ra\-me\-te\-ri\-zed case, we deduce in \cite{MS0}  the following consequence of the parameterized versions Theorem \ref{schlesinger} of Schlesinger's theorem and Theorem \ref{weakRH} above of the weak Riemann-Hilbert problem. As before, let $t=(t_1,\ldots,t_r)$ be a multi-parameter and   $\Delta_t
=\{\d_{t_1},\ldots,\d_{t_r}\}$ the corresponding partial derivations. We consider the differential  field $k=k_0(x)$, where $k_0$ is a $\Delta_t$-differentially closed field containing $\CX(t_1,\ldots,t_r)$, and $k$ is endowed with the derivations $\Delta=\{\d_x, \d_{t_1},\ldots,\d_{t_r}\}$. 

\begin{theorem}\label{inverseprop} Let $G$ be a $\Delta_t$-linear differential algebraic group defined over $k_0$ and assume that $G(k_0)$ contains a finitely generated Kolchin-dense subgroup $H$. Then $G(k_0)$ is the PPV-group of a PPV-extension of $k=k_0(x)$. \end{theorem}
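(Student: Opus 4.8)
The plan is to construct the desired PPV-extension by realizing $H$ as the monodromy group of an explicit parameterized Fuchsian system and then applying the parameterized Schlesinger theorem. Since $H$ is finitely generated, fix generators $h_1,\ldots,h_s$ of $H$; these are matrices in $\GL_n(k_0)$, but by the assumption on $k_0$ we may regard their entries as analytic functions on some domain $D\subset\CX^r$ (indeed $k_0$ contains a field of functions analytic on such a domain, and after shrinking $D$ we can assume all entries of all $h_i$ are analytic there). Setting $M_i=h_i$ for $i=1,\ldots,s$ and $M_{s+1}=(h_1\cdots h_s)^{-1}$, choose $s+1$ distinct points $a_1,\ldots,a_{s+1}\in\PX^1(\CX)$ together with a base-point and loops $\gamma_i$ generating the fundamental group of the complement; then $M_1\cdots M_{s+1}=I_n$.

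Next I would invoke the parameterized weak Riemann-Hilbert problem, Theorem \ref{weakRH}, to obtain a parameterized linear system $\d_x Y=A(x,t)Y$ with $A\in\gl_n(\calO_{D'}(x))$ for some polydisk $D'\subset D$, having only regular singular points, all in $\{a_1,\ldots,a_{s+1}\}$, and whose parameterized monodromy matrices with respect to some parameterized fundamental solution $Z_0$ are exactly the $M_i$. Since the singular points are fixed complex numbers and the $x$-coefficients of $A$ lie in the field $k_0$ (we may enlarge $k_0$ harmlessly, or rather observe that the relevant subfield already lies in $k_0$ because $k_0\supset\CX(t_1,\ldots,t_r)$ and, after applying Seidenberg's theorem \ref{seidenberg}, the analytic coefficients produced by the construction can be taken inside $k_0$), the hypotheses of the parameterized Schlesinger theorem \ref{schlesinger} are met: the system has parameterized regular singularities only, and $k_0$ is $\Delta_t$-differentially closed and contains the singular points and the entries of the monodromy matrices. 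Theorem \ref{schlesinger} then tells us that the parameterized monodromy matrices $M_1,\ldots,M_{s+1}$ generate a Kolchin-dense subgroup of $G'(k_0)$, where $G'$ is the PPV-group of this system over $k=k_0(x)$.

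Finally I would identify $G'$ with $G$. By construction the group generated by the $M_i$ is precisely $H$ (the extra generator $M_{s+1}$ being a word in the others), so $H$ is Kolchin-dense in $G'(k_0)$. On the other hand $H$ is by hypothesis Kolchin-dense in $G(k_0)$, and both $G$ and $G'$ are linear differential algebraic groups defined over $k_0$ sitting inside $\GL_n(k_0)$ with the \emph{same} Kolchin closure of $H$; hence $G(k_0)=\overline{H}=G'(k_0)$ as Kolchin-closed subgroups, and the PPV-extension $k\langle Z_0\rangle$ realizing $G'$ also realizes $G$.

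The main obstacle is the bookkeeping of fields of definition: one must check that the coefficients of the system delivered by Theorem \ref{weakRH} — a priori merely analytic on a polydisk — can be taken to lie in the prescribed differentially closed field $k_0$, and that the resulting PPV-group is defined over $k_0$ rather than over some larger analytic field. This is handled exactly as in the proof of Theorem \ref{schlesinger}, using Seidenberg's theorem \ref{seidenberg} to move between abstract differential fields and fields of meromorphic functions, together with Proposition \ref{letprop2} to descend the PPV-group along the inclusion of differentially closed $\Delta_t$-fields; the only genuinely delicate point is ensuring compatibility of all these embeddings with the fixed generators $h_i$ of $H$, which one arranges by choosing the domain $D$ small enough at the outset.
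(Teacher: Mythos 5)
Your proposal is correct and follows exactly the route the paper takes: the paper presents Theorem \ref{inverseprop} as a direct consequence of the parameterized weak Riemann--Hilbert theorem (Theorem \ref{weakRH}), used to realize the generators of $H$ as parameterized monodromy matrices of a regular singular system, combined with the parameterized Schlesinger theorem (Theorem \ref{schlesinger}), which identifies the Kolchin closure of $H$ with the PPV-group. Your handling of the field-of-definition bookkeeping via Seidenberg's theorem and Proposition \ref{letprop2} is also the mechanism the paper (deferring to \cite{MS0}) relies on, so there is nothing substantive to add.
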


The  condition in Theorem \ref{inverseprop},  that $G(k_0)$ contains a finitely generated Kolchin-dense subgroup $H$, characterizes indeed those linear differential algebraic groups over $k_0$ which are PPV-groups. The fact that the condition is  also necessary  was proved by  Dreyfus  \cite{TD} as a consequence of his parameterized version of Ramis's density theorem (see for example \cite{PuSi2003} p. 238). Ramis's theorem says that the (local) differential Galois group  over $\CX(\{x\})$ (local  at $0$) of a linear differential system of order $n$ is the Zariski-closure in $\GL(n,\CX)$ of a subgroup finitely generated by the so-called formal monodromy,  Stokes matrices and  exponential torus, together also called {\em generalized monodromy data}, which generalize to irregular singularities the notion of monodromy matrices for  regular singularities. Moreover, it can be proved that the (global) differential Galois group over $\CX(x)$ of a linear differential system is  the Zariski-closure of the subroup generated by the finitely many ``local" differential Galois groups just mentionned,  which can be simultaneously embedded as subgroups in the global PV-group.  Dreyfus \cite{TD} defines a parameterized version of the generalized monodromy data and gives a parameterized version of this theorem, which in turn gives the converse result of Theorem \ref{inverseprop} above.

\smallskip 
In the non-parameterized case, the solution by Tretkoff and Tretkoff \cite{tretkoff79} of the  differential Galois inverse problem over $\CX(x)$ uses the fact, proved by the same authors, that over an algebraically closed field of characteristic zero, any linear algebraic group is  the Zariski closure of some finitely generated subgroup. The latter does not hold though for linear {\em differential} algebraic groups. This can in particular be seen on  the additive group $\Ga(k_0)$ (using  notation as above for the differential field $k_0$)  which has the striking property that the Kolchin-closure of any of its  finitely generated subgroups is  a proper subroup of  $\Ga(k_0)$ (\cf \cite{MS0}). In \cite{Landesman} and \cite{CaSi} it is furthermore  shown that neither $\Ga(k_0)$ nor  $\Gm(k_0)$ is  the PPV-group of any PPV-extension of $k_0(x)$. In  \cite{sin11},  Singer proves the following result, using Corollary~\ref{inverseprop}. 
\begin{theorem} With notation as above, a linear algebraic group $G$ defined over $k_0$ is a PPV-group of a PPV-extension of $k_0(x)$ if and only if the identity component of $G$ has no quotient isomorphic to $\Ga(k_0)$ or $\Gm(k_0)$.
\end{theorem}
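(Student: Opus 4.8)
The statement to prove is Singer's characterization: a linear algebraic group $G$ over $k_0$ is a PPV-group over $k_0(x)$ if and only if the identity component $G^0$ has no quotient isomorphic to $\Ga(k_0)$ or $\Gm(k_0)$. The strategy splits into the two implications, and the crux is to translate ``being a PPV-group'' — an analytic/differential statement — into the purely group-theoretic criterion via Theorem~\ref{inverseprop}. For the necessity direction ($\Rightarrow$), the plan is to argue contrapositively: suppose $G^0$ has a quotient $\pi\colon G^0\twoheadrightarrow H$ with $H\cong\Ga(k_0)$ or $\Gm(k_0)$. If $G$ were the PPV-group of some PPV-extension $K/k_0(x)$, then by the Galois correspondence of PPV-theory the normal closed subgroup $\ker(\pi\colon G\to \text{(relevant quotient)})$ would correspond to an intermediate $\Delta$-extension whose own PPV-group over $k_0(x)$ — after passing to the connected component, which corresponds to a finite algebraic extension of $k_0(x)$ — is exactly $H$. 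But Landesman and Cassidy--Singer already showed (cited in the excerpt) that neither $\Ga(k_0)$ nor $\Gm(k_0)$ is the PPV-group of any PPV-extension of $k_0(x)$; one must only check that this obstruction survives the passage to a finite extension of the base field, which follows because a finite extension of $k_0(x)$ is again of the form $k_0(y)$ (rational function field over the same $\Delta_t$-differentially closed $k_0$, by Lüroth), so the same non-realizability theorem applies verbatim.

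**The sufficiency direction.** Now assume $G^0$ has no such quotient; I want to produce a finitely generated Kolchin-dense subgroup of $G(k_0)$ and then invoke Theorem~\ref{inverseprop}. First reduce to the connected case: pick finitely many elements of $G(k_0)$ meeting every connected component (possible since $G$ has finitely many components), so it suffices to find a finitely generated Kolchin-dense subgroup of $G^0(k_0)$ and adjoin those coset representatives. For the connected group $G^0$, the hypothesis — no quotient $\Ga$ or $\Gm$ — is exactly the condition that rules out the two pathological groups on which finite generation of a Zariski-dense, or here Kolchin-dense, subgroup fails. The plan is to climb the structure of $G^0$: by Tretkoff--Tretkoff, over the algebraically closed field $k_0$ every linear algebraic group is the Zariski closure of a finitely generated subgroup $H$; the issue is whether $H$ is also \emph{Kolchin}-dense in $G^0(k_0)$, i.e. whether the Kolchin closure $\overline{H}^{\,\mathrm{Kolchin}}$ equals all of $G^0(k_0)$ rather than some proper differential algebraic subgroup. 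The group-theoretic analysis of Kolchin-dense subgroups (Cassidy's structure theory of linear differential algebraic groups) shows the only obstructions to a Zariski-dense finitely generated subgroup being Kolchin-dense come from abelian quotients that are $\Ga$- or $\Gm$-like; ruling these out by hypothesis, one can choose the generators of $H$ so that $H$ is Kolchin-dense.

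**The main obstacle.** The hard part is exactly this last step: showing that the absence of $\Ga$ and $\Gm$ quotients of $G^0$ is \emph{sufficient} for the existence of a finitely generated Kolchin-dense subgroup. One must handle the semisimple part (where Zariski-density already gives Kolchin-density, using that a semisimple algebraic group has no nontrivial differential algebraic subgroup of finite index issues and its Zariski-dense subgroups are Kolchin-dense — a theorem in Cassidy's work), the unipotent radical, and the toral part, and control how Kolchin closures behave under the extensions gluing these pieces together. The $\Ga$ and $\Gm$ obstruction must be tracked through each extension in a dévissage of $G^0$; the delicate point is that Kolchin-density is not automatically inherited in extensions, so one argues that if every abelian ``$\Ga$- or $\Gm$-type'' subquotient is absent, the relevant $\mathrm{Ext}$-type obstructions to lifting Kolchin-dense subgroups through the layers vanish, allowing the generators to be chosen compatibly. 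Once a finitely generated Kolchin-dense $H\subset G(k_0)$ is in hand, Theorem~\ref{inverseprop} finishes the proof immediately, realizing $G(k_0)$ as a PPV-group over $k_0(x)$.
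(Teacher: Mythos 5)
The paper itself contains no proof of this statement: it is Singer's theorem, quoted from \cite{sin11} with only the indication that the argument runs through Theorem \ref{inverseprop}. Your overall plan for sufficiency --- produce a finitely generated Kolchin-dense subgroup of $G(k_0)$ and apply Theorem \ref{inverseprop} --- is indeed the intended route, but the step that carries the whole weight is missing, and the one concrete assertion you make about it is false. The content of the hard direction is precisely that if $G^0$ has no quotient isomorphic to $\Ga$ or $\Gm$ (equivalently, $G^0$ is perfect), then $G(k_0)$ admits a finitely generated Kolchin-dense subgroup; your paragraph describes the d\'evissage that would be needed but supplies no argument. Moreover, for a semisimple group Zariski-density of a finitely generated subgroup does \emph{not} imply Kolchin-density: $\SL_2(\ZX)$ is Zariski-dense in $\SL_2(k_0)$ yet lies in the proper Kolchin-closed subgroup $\SL_2(C)$, where $C$ is the field of $\Delta_t$-constants. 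Cassidy's classification says exactly that the Zariski-dense proper Kolchin-closed subgroups of a semisimple group are (conjugates of) groups of points over fields of constants of subsets of the derivations, so the generators must be supplemented by a differentially generic element to escape all of these, after which the unipotent radical is handled using that perfectness of $G^0$ forces it to be generated by commutators. None of this appears in your sketch.

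The necessity direction also has two concrete failures. First, the kernel of a surjection $G^0\to H$ is normal in $G^0$ but not in general in $G$, so the Galois correspondence does not apply to it directly; the correct reduction is to quotient by $(G^0,G^0)$, which is characteristic in $G^0$ and hence normal in $G$, leaving a group whose identity component is a nontrivial product of a torus and a vector group. Second, and more seriously, a finite extension of $k_0(x)$ is the function field of a curve over $k_0$ that need not be rational --- L\"uroth's theorem concerns subfields of $k_0(x)$, not finite extensions of it --- so the non-realizability of $\Ga(k_0)$ and $\Gm(k_0)$ over $k_0(x)$ proved by Landesman and Cassidy--Singer does not ``apply verbatim'' to the intermediate field fixed by $G^0$. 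One must instead work over $k_0(x)$ itself with the quotient $G/(G^0,G^0)$, a finite extension of a nontrivial torus-times-vector group, and show directly that no such group is a PPV-group.
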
 

More recently, Minchenko, Ovchinnikov and Singer \cite{MOS} gave a characterization of  linear unipotent differential algebraic groups that can be realized as PPV-groups. 

\begin{theorem}[ Minchenko, Ovchinnikov, Singer] A unipotent linear differential algebraic group $G$ over $k_0$ is the Kolchin-closure of a finitely generated subgroup if and only if it has differential type $0$. 
\end{theorem}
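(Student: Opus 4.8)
The plan is to reduce the statement to the commutative case — differential algebraic subgroups of a vector group $\Ga^n$ — via the derived series, and then to argue the two implications separately. Write $C$ for the field of $\Delta_t$-constants of $k_0$; it is algebraically closed since $k_0$ is differentially closed. Because $G$ is unipotent its Zariski closure is a unipotent, hence nilpotent, algebraic group, so $G$ is nilpotent as an abstract group and its derived series $G=G^{(0)}\supseteq G^{(1)}\supseteq\cdots\supseteq G^{(\ell)}=\{e\}$ consists of Kolchin-closed normal subgroups (commutator subgroups of Kolchin-closed subgroups being Kolchin-closed) whose successive quotients $G^{(j)}/G^{(j+1)}$ are commutative unipotent, hence differential algebraic subgroups of vector groups $\Ga^{n_j}$. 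Throughout I will use the basic machinery of differential algebraic groups: the Kolchin closure of a subgroup is a subgroup, images of morphisms of differential algebraic groups are Kolchin-closed, differential type does not increase under passage to a Kolchin-closed subgroup or to a quotient, and the Kolchin (differential dimension) polynomial $\omega$ is additive on short exact sequences, so that $\omega_G=\sum_j\omega_{G^{(j)}/G^{(j+1)}}$.

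The commutative base case is the heart of the matter. Here I would invoke Cassidy's description of the differential algebraic subgroups of $\Ga^n$ to identify those of differential type $0$ with the finite-dimensional $C$-subspaces of $\Ga^n(k_0)$, and I would check that any such subspace $W$ is, via a choice of $C$-basis, isomorphic over $k_0$ as a differential algebraic group to a constant vector group $\Ga(C)^{d}$; on the latter the Kolchin topology coincides with the Zariski topology, so that the Kolchin-closed subgroups of $W$ are precisely its $C$-subspaces. Granting this, the commutative case follows: if $V\subseteq\Ga^n$ equals $\overline{H_0}$ for a finitely generated subgroup $H_0$, then, since $\ZX\subseteq C$, $H_0$ is contained in the finite-dimensional $C$-subspace $\sum_i Cw_i$ spanned by any finite generating set $w_1,\dots,w_m$, whence $V\subseteq\sum_i Cw_i$ has differential type $0$; conversely, if $V$ has differential type $0$ then $V\cong\Ga(C)^{d}$ over $k_0$, and as $\ZX^{d}$ is Zariski-dense in $\Ga(C)^{d}$ the image of this lattice is a finitely generated Kolchin-dense subgroup of $V$.

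It then remains to lift this to arbitrary unipotent $G$, which is a formal induction along the derived series. For ``$\Rightarrow$'': each $H_0^{(j)}$ is finitely generated (subgroups of finitely generated nilpotent groups are), continuity of the commutator map yields $G^{(j)}=\overline{H_0^{(j)}}$ for all $j$, so the image of $H_0^{(j)}$ in the vector group $G^{(j)}/G^{(j+1)}$ is finitely generated and Kolchin-dense; by the commutative case each $G^{(j)}/G^{(j+1)}$ has differential type $0$, and additivity of $\omega$ forces $\omega_G$ to have degree $0$. For ``$\Leftarrow$'', with $G$ of differential type $0$: I induct on the derived length $\ell$, the case $\ell\le 1$ being the commutative case; for $\ell\ge 2$, lift a finite generating set $v_1,\dots,v_d$ of a Kolchin-dense subgroup of $G/G^{(1)}$ to elements of $G$, adjoin a finitely generated Kolchin-dense subgroup $H_1$ of $G^{(1)}$ (which exists by the induction hypothesis, $G^{(1)}$ having derived length $\ell-1$ and differential type $0$), and put $H=\langle v_1,\dots,v_d,H_1\rangle$; then the image of $\overline H$ in $G/G^{(1)}$ is a Kolchin-closed subgroup containing the $v_i$, hence equals $G/G^{(1)}$, so $\overline H\,G^{(1)}=G$, while $\overline H\supseteq\overline{H_1}=G^{(1)}$, and therefore $\overline H=G$. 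The step I expect to be the main obstacle is the commutative base case, and specifically the verification that differential type $0$ forces a differential algebraic subgroup of $\Ga^n$ to be a finite-dimensional $C$-subspace and hence, over $k_0$, a constant vector group — this is where Kolchin's dimension theory and Cassidy's structure theory for subgroups of vector groups do the real work, the rest being bookkeeping.
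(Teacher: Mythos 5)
The paper itself gives no proof of this statement: it is quoted from \cite{MOS} as a survey item, so there is no in-text argument to compare yours against, and your proposal has to be measured against the source. Your overall strategy---reduce along the commutator structure to differential algebraic subgroups of $\Ga^n$, where Cassidy's theorem identifies Kolchin-closed subgroups with $C$-subspaces ($C$ the field of $\Delta_t$-constants of $k_0$), and observe that differential type $0$ there means finite $C$-dimension, hence that such a subgroup is the Kolchin closure of the $\ZX$-span of a $C$-basis---is exactly the engine of the argument in \cite{MOS}, and your commutative base case is correct and is indeed where the content lies.

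The genuine gap is the parenthetical claim that ``commutator subgroups of Kolchin-closed subgroups are Kolchin-closed,'' on which your whole reduction rests. In the differential algebraic category this is not automatic. The algebraic-groups proof that $(G,G)$ is closed for connected $G$ uses the ascending chain condition on irreducible closed subsets, so that products of the constructible set of commutators stabilize after finitely many steps; that ACC fails in the Kolchin topology, since already $\Ga$ contains the infinite strictly increasing chain of connected Kolchin-closed subgroups $C\subsetneq\{y: y''=0\}\subsetneq\{y:y'''=0\}\subsetneq\cdots$. The repair is to run the induction on the series of Kolchin closures $\overline{G^{(j)}}$, using the topological-group identity $\overline{(\overline{A},\overline{B})}=\overline{(A,B)}$: the successive quotients $\overline{G^{(j)}}/\overline{G^{(j+1)}}$ are still commutative unipotent (hence embeddable in vector groups, by Cassidy), your ``$\Leftarrow$'' step survives verbatim because you arrange $\overline{H}\supseteq\overline{H_1}=\overline{G^{(1)}}$ directly rather than through a Frattini-type argument, and your ``$\Rightarrow$'' step only needs $\tau(G)=\max\bigl(\tau(N),\tau(G/N)\bigr)$ for $N$ normal and Kolchin-closed---which is the statement you should invoke in place of exact additivity of the Kolchin polynomial, a stronger claim you do not need and should not assert without proof. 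With these repairs the argument is complete and essentially that of \cite{MOS}.
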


The meaning here of ``differential type $0$" is that a so-called `differential dimension' be finite.  The latter is defined as the transcendence degree over $k_0$ of the `differential function field'  $k_0\langle G^0\rangle$ over $k_0$ of   the identity component $G^0$ of $G$. 
If $G\subset\GL(n,k_0)$, the differential function field of $G^0$, denoted $k_0\langle G^0\rangle$,  is  the quotient-field of $R/{\mathcal I}$, where  $R/{\mathcal I}$ is the differential coordinate ring of the group. More precisely,   $R/{\mathcal I}$ is  the  quotient of the ring of differential polynomials $k_0\{y_{1,1},\ldots y_{n,n}\}$ in $n^2$ differential indeterminates (differential with respect to $\Delta_t$) by the differential ideal $\mathcal I$ of those differential polynomials vanishing  on $G^0$. 

The same authors have also given a characterization in \cite{MOSS} of those reductive linear differential algebraic groups that can occur as PPV-groups over $k_0(x)$. In both \cite{MOS} and \cite{MOSS} the authors give algorithms to determine if the PPV-groups is of the relevant type and give algorithms to compute this group if it is.

\section{Appendix}
Let $(K,\d)$ be an ordinary differential field  and $K\{X\}$ the differential ring of differential polynomials in one differential variable. By definition $K\{X\}$ is the ring $K[X_0, X_1,\ldots,X_n,\ldots]$ of polynomials in  the indeterminates $X_0, X_1,\ldots,X_n,\ldots,$ with  the derivation $\d$   extended  by $\d X_i=X_{i+1}$ for all $i\ge 0$. In $K\{X\}$ one writes $X$ for $X_0$, $X'$ for $X_1$, and $X^{(i)}:=\d^{(i)}X$ for all $X_i$.
The {\em order} $o(f)$ of an element $f\in K\{X\}$  is defined as the least integer $n$ such that $f\in K[X_0, X_1,\ldots,X_n] $ if $f\notin K$,    and $o(f)=-1$ if $f\in K$.  For basic facts and model theoretic properties of the theory DCF of differential closed fields, we refer for instance to  \cite{DaMa}, \cite{DaMa2}, \cite{Ro63}.

The  following definition is close to the definition of  algebraic closedness. It is  due to~Blum\cite{blum68}, who simplified an earlier  definition    introduced by  Robinson \cite{Ro59}. 
\begin{definition} [Blum] \label{DCC}The differential field $(K,\d)$ is said to be {\em differentially closed} if for any $f,g\in K\{X\}$, $f\notin K$  with $o(g)< o(f)$, there is an $a\in K$ such that $f(a)=0$ and $g(a)\neq 0$. 
\end{definition}
This definition is for instance well illustrated on Example~\ref{toymodel} above
\begin{equation*}
\frac{dy}{dx}=\frac{t}{x} y.
\end{equation*}
Let us show that over $K(x)$, where $K$ is a differentially closed field containing $\CX(t)$, the obstruction  to Galois correspondence vanishes. We recall that the PPV-extension of this equation over $K(x)$ is $K(x,x^t,\log x)$ and that   an element $\sigma$ of the PPV-group is defined by 
\[\sigma(x^t)=a_{\sigma}x^t, \ \ \sigma(\log x)=\log x+c_{\sigma}\]
where  $a_{\sigma}\in K^*$ satisfies \[a_{\sigma}'' a_{\sigma} - a_{\sigma}'^2=0 \]
and
\[c_{\sigma}=\frac{a_{\sigma}'}{a_{\sigma}}, \]
and where $a_{\sigma}', a_{\sigma}''$ are  derivatives with respect to the derivation extending $d/dt$.

To avoid  that $\log x$ be invariant by the PPV-group (in which case the invariant field of the PPV-group would not be the base-field $K(x)$)  we need  at least  one $\sigma$  to be such that $\sigma(\log x)\neq \log x$, that is,   given by    $a_{\sigma}\in K^*$  such that 

\[a_{\sigma}'' a_{\sigma} - a_{\sigma}'^2=0,  \qquad  \frac{a_{\sigma}'}{a_{\sigma}}\neq 0.\]Ê

Since $K$ is differentially closed, such an element exists by Definition \ref{DCC} applied to  $f(X)=X'' X -X'^2$ and $g(X)=X'$. 

The definition of  general   (non-ordinary)  differentially closed fields is due  to Kolchin \cf \cite{kolchinDCF} who called them ``constrainedly closed". For ordinary differential fields, the definition below is  equivalent to Definition~\ref{DCC} above.
\begin{definition}[Kolchin] Let $K$ be a $\Delta$-differential field, endowed with a finite set $\Delta$ of commuting derivations on $K$. The field $K$ is $\Delta$-differentially closed if it has no proper  constrained extensions.
\end{definition}

The definition of constrained extensions is the following.

\begin{definition} Let $K$ be a $\Delta$-differential field. A differential extension $L$ of $K$ is said to be {\em constrained} if for any finite family of elements  $(\eta_1,\ldots,\eta_s)$ of $L$ there is a $\Delta$-differential polynomial $P\in K\{y_1,\ldots,y_s \}$  such that $P(\eta_1,\ldots,\eta_s)\neq 0$ whereas $P(\zeta_1,\ldots, \zeta_s)=0$ for any non-generic differential specialization $(\zeta_1,\ldots,\zeta_s)$ of $(\eta_1,\ldots,\eta_s)$ over $K$.
\end{definition}

In Kolchin's terminology, a differential specialization $\zeta=(\zeta_1,\ldots,\zeta_s)$ of $\eta=(\eta_1,\ldots,\eta_s)$ in some extension of $K$ is  {\em generic} if the defining ideals of $\zeta$ and $\eta$ in $K\{y_1,\ldots,y_s \}$ are the same. We refer to Kolchin's original work for  details  about these notions (\cf \cite{kolchinDCF}, \cite{DAAG}, \cite{kolchin_groups}). 
The differential closure  is defined in a similar way as the algebraic closure.  
\begin{definition}
Let $K$ be a $\Delta$-differential field. A~{\em differential closure}  of $K$ is a differential, differentially closed extension of 
$K$ which can be embedded in any  given differential, differentially closed extension of $K$. 
\end{definition} 

\begin{theorem}
A differential field $K$ has a unique differential closure. 
\end{theorem}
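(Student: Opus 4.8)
The plan is to establish existence and uniqueness separately, following the classical model-theoretic treatment of the algebraic closure and Kolchin's constrained-closure framework referenced in the excerpt.

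For \emph{existence}, the approach is to build the differential closure as a union of an increasing chain of constrained extensions. Starting from $K=K_0$, whenever $K_\alpha$ fails to be $\Delta$-differentially closed, it admits by definition a proper constrained extension; one adjoins such an extension (or, more economically, iterates over all constrained extensions generated by finitely many elements satisfying a constraint, realizing each in turn) to form $K_{\alpha+1}$, taking unions at limit ordinals. A cardinality bound on the constrained types over any field of bounded cardinality shows the chain stabilizes, and the stable field is $\Delta$-differentially closed. The delicate point is that the resulting field should still embed into \emph{every} differentially closed extension of $K$; this is exactly where one needs that the construction only adjoins constrained elements, since a constrained element over $K$ (being the unique generic specialization cut out by its constraint) has an isomorphic copy inside any differentially closed extension of $K$ by a back-and-forth / prime-model argument. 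Equivalently, one invokes that the theory $\mathrm{DCF}_{0,m}$ of differentially closed fields with $m$ commuting derivations is $\omega$-stable (Kolchin, and in the ordinary case via Blum's work cited here), so prime models over arbitrary parameter sets exist; the differential closure of $K$ is precisely the prime model of $\mathrm{DCF}$ over $K$.

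For \emph{uniqueness}, the plan is the standard two-step argument. First, any differential closure $L$ of $K$ is \emph{atomic} (equivalently, constrained) over $K$: every finite tuple from $L$ satisfies a constraint over $K$, since otherwise one could build a proper constrained extension of $L$ inside a monster model, contradicting that $L$ embeds into every differentially closed extension while being differentially closed itself. Second, any two atomic-and-differentially-closed extensions $L_1, L_2$ of $K$ are isomorphic over $K$ by a back-and-forth argument: enumerate each side, and at each stage use that a constrained (atomic) element on one side has a matching realization on the other side (this uses $\omega$-stability, or directly the uniqueness of generic specializations of constrained tuples, to see the relevant types are principal and hence realized). This yields a $\Delta$-$K$-isomorphism $L_1\cong L_2$.

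The main obstacle is the uniqueness/embedding half, specifically controlling constrained types well enough to run back-and-forth: one must know that the set of constrained extensions is small and that constrained (principal) types are preserved under the relevant field extensions, which is precisely the content of the $\omega$-stability of $\mathrm{DCF}_{0,m}$ and of Kolchin's theory of constrained extensions. Once that input is granted (it is part of the standard theory cited in the references \cite{kolchinDCF}, \cite{blum68}, \cite{DaMa}), both existence and uniqueness are formal. I would therefore present the proof as: (i) recall $\omega$-stability of $\mathrm{DCF}$; (ii) deduce existence and uniqueness of prime models over $K$; (iii) identify the prime model with the differential closure by checking the universal embedding property against the constrainedness of prime models.
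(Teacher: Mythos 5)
The paper offers no proof of this statement: it is quoted as a known theorem and attributed to Morley, Blum, Shelah and Kolchin, so there is nothing internal to compare your argument against. Your sketch is a faithful reconstruction of the standard argument from exactly those references --- identify the differential closure with the prime model of $\mathrm{DCF}_{0,m}$ over $K$, get existence from $\omega$-stability (prime models over arbitrary parameter sets exist in $\omega$-stable theories), and get uniqueness from the uniqueness of prime models, with Kolchin's constrained elements playing the role of realizations of isolated (principal) types. That is the right decomposition and the right identification of where the real content lies.

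One caveat worth making explicit: the uniqueness half is substantially deeper than ``a back-and-forth argument.'' For countable $K$ in the ordinary case a back-and-forth between atomic models does work (Morley, Blum), but for an arbitrary differential field $K$ the uniqueness of the prime model over $K$ is Shelah's theorem on uniqueness of prime models in totally transcendental theories, whose proof is not a naive enumeration argument. Your write-up presents this step as routine once $\omega$-stability is granted; if you intend the theorem in the generality stated (arbitrary $K$, several commuting derivations), you should either cite Shelah's uniqueness theorem as a black box at that point or follow Kolchin's constrained-extension proof, rather than suggest the back-and-forth closes the argument on its own. With that attribution made precise, your outline is correct and is the standard proof.
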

This result was proved  by Morley \cite{morley}, Blum \cite{blum68}, Shelah \cite{shelah73} and Kolchin \cite{kolchinDCF}.  
Unlike the algebraic closure though, the differential closure fails to be minimal, even in characteristic $0$. Although it had been conjectured by some authors to be minimal (\cf \cite{sacks72}),  Kolchin, Rosenlicht, and Shelah independently proved that it is not. Shelah \cite{shelah73}  in particular proved that the ordinary differential closure  $\tilde{\QX}$ of $\QX$ is not minimal by exhibiting an infinite, strictly decreasing sequence of differentially closed intermediate differential extensions of $\QX$ in $\tilde{\QX}$. 

\bigskip
\noindent
{\bf Acknowledgements.} I would like to thank Michael Singer for helpful comments and   Renat Gontsov for his thorough proofreading of the manuscript. I am also grateful to Phyllis Cassidy for her slides on Kolchin's approach to DCF.

\bibliographystyle{amsplain}

\begin{thebibliography}{10}
 
 
  \bibitem{anosov_bolibruch}
 D.~V.~Anosov and A.~A.~Bolibruch,
 \newblock The Riemann-Hilbert Problem,
 \newblock Vieweg, Braunschweig, Wiesbaden, 1994.
 
 
\bibitem{babbitt_varadarajan}
 D.~G.~Babbitt and V.~S.~Varadarajan,
 \newblock Deformations of nilpotent matrices over rings and reduction of analytic families of differential equations,
 \newblock Memoirs AMS 55 (325), 1985.


  \bibitem{blum68}
 L.~Blum,
 \newblock Generalized algebraic structures: A model theoretic approach. Ph.D. dissertation,
 \newblock Massachusetts Institute of Technology, Cambridge, MA, 1968.
  

\bibitem{Bo1} A.~A.~ Bolibruch,
\newblock The Rieman-Hilbert problem, 
\newblock Russian Math. Surveys, 45, 1-47, 1990. 

\bibitem{Bo2} A.~A.~ Bolibruch,
\newblock  On sufficient conditions for the positive solvability of the Riemann-Hilbert problem, 
\newblock Math. Notes. Acad. Sci. USSR, 51 (1),  110-117, 1992.

\bibitem{Bo3} A.~A.~ Bolibruch,
\newblock  On an analytic transformation to the standard Birkhoff  form, 
\newblock Proc. Steklov Inst. Math.   203 (3),  29-35, 1995.

\bibitem{Bo3b} A.~A.~ Bolibruch,
\newblock The 21st Hilbert Problem for Linear Fuchsian Systems, 
\newblock Proc. Steklov Inst. Math. 206 (5),  1-145, 1995.

 \bibitem{Bol_iso_def}
A.~A.~Bolibruch,
\newblock On Isomonodromic Deformations of Fuchsian Systems
\newblock  J.~Dynam.\ Contr.\ Sys., 3 (4),  589-604, 1997.

\bibitem{bolibruch02}
 A.~A.~Bolibruch,
\newblock {Inverse problems for linear differential equations with
              meromorphic coefficients},
\newblock  Isomonodromic deformations and applications in physics
              ({M}ontr\'eal, {QC}, 2000), CRM Proc. Lecture Notes 31, 3--25, 2002.
 
 \bibitem{bolibruch04}
 A.~A.~Bolibruch and A.~R.~Its and A.~A.~Kapaev,
 \newblock On the Riemann-Hilbert-Birkhoff inverse monodromy problem and the Painlev\'e equations. Algebra i Analiz, 16(1), 121-162, 2004.
 
\bibitem{BoMaMi}
A.~A.~Bolibruch and S.~Malek and C.~Mitschi,
\newblock On the generalized Riemann-Hilbert problem with irregular singularities,
\newblock Expositiones Mathematicae, 24(3),  235--272, 2006.

\bibitem{Borel2}
A.~Borel,
\newblock Essays in the History of Lie Groups and Algebraic Groups
\newblock American mathematical Society, 2001.


\bibitem{Buium}
A.~Buium
\newblock Differential Algebraic Groups of Finite Dimension
\newblock Springer Lecture Notes in Math.1506, 1992.



\bibitem{ElieCartan}
\'E.~Cartan, 
\newblock Les syst\`emes diff\'erentiels ext\'erieurs et leurs applications g\'eom\'etriques,
\newblock Expos\'es de g\'eom\'etrie XII, Hermann, Paris, 1945.

\bibitem{Cartan}
H.~Cartan, 
\newblock Th\'eorie \'el\'ementaire des fonctions analytiques d'une ou plusieurs variables complexes,
\newblock Hermann, Paris, 1961.

\bibitem{cassidy}
P.~J.~Cassidy,
\newblock Differential algebraic groups
\newblock { American Journal of Mathematics}, 94:891-954,~1972.

\bibitem{CaSi}
P.~J.~Cassidy, M.~F.~Singer,
\newblock Galois Theory of parameterized Differential Equations and Linear Differential Algebraic Groups,
\newblock { Differential Equations and Quantum Groups}, D. Bertrand et.~al., eds.,  IRMA Lectures in Mathematics and Theoretical Physics 9, 113-157,2006.

\bibitem{ChAb}
S.~Chakravarty, M.~J.~Ablowitz,
\newblock Integrability, monodromy evolving deformations, and self-dual Bianchi~IX systems,
\newblock { Physical Review Letters} 76(6), 857-860,1996.

\bibitem{GD1}
G.~Darboux,  
\newblock Syst\`emes orthogonaux,
\newblock Ann. Sc. \'E.N.S., 1e s\'erie, tome 3, 97 -141, 1866.

\bibitem{GD2}
G.~Darboux, 
\newblock M\'emoire sur la th\'eorie des coordonn\'ees curvilignes, et des syst\`emes orthogonaux,
\newblock Ann. Sc. \'E.N.S., 2e s\'erie, tome 7, 101-150, 227-260, 275-348, 1878.
 
 
\bibitem{TD}
T.~Dreyfus,
\newblock A parameterized density theorem in differential Galois
theory
\newblock Pacific J. Math. {271}(1) 87-141, 2014.

\bibitem{H1}
G.~H.~Halphen,
\newblock Sur un syst\`eme dÕ\'equations diff\'erentielles,
\newblock C. R. Acad. Sci. 92, 1101-1103, 1881.

\bibitem{H2}
G.~H.~Halphen,
\newblock Sur certains syst\`emes dÕ\'equations diff\'erentielles,
\newblock C. R. Acad. Sci. 92,  1404-1406, 1881.

\bibitem{kolchinDCF}
E.~Kolchin,
\newblock Constrained extensions of differential fields,
\newblock {\it Advances in Math.} 12 (2), 141-170, 1974.

\bibitem{DAAG}
E.~R.~Kolchin,
\newblock Differential Algebra and Algebraic Groups,
\newblock Academic Press, 1976.

\bibitem{kolchin_groups}
E.~R.~Kolchin,
\newblock Differential algebraic groups
\newblock Academic Press, New York, 1985

\bibitem{Kuga}
M.~Kuga,
\newblock Galois'Dream : Group theory and differential equations
\newblock BirkhŠuser, 1993.

\bibitem{Landesman}
P.~Landesman,
\newblock Generalized differential Galois theory,
\newblock {\it Trans. Amer. Math. Soc.} 360(8), 4441--4495, 2008.

\bibitem{ma80a}
B.~Malgrange,
\newblock Sur les d\'eformations isomonodromiques. {I}. {S}ingularit\'es r\'eguli\`eres,
 \newblock { Mathematics and physics (Paris, 1979/1982)}, Progr. Math. 37, 401-426,1983.

\bibitem{DaMa}
D.~Marker,
\newblock Model theory of differential fields
 \newblock {t Preprint, University of Illinois at Chicago}, 2000. (\cf http://www.msri.org/publications/books/Book39/files/dcf.pdf)
 
 \bibitem{DaMa2}
D.~Marker,
\newblock Model theory, algebra and geometry
 \newblock { Math. Sci. Res. Inst. Publ.} 39,  53-63,  Cambridge Univ. Press.
 
\bibitem{MS0}
C.~Mitschi, M.~F.~Singer,
\newblock Monodromy groups  of 
parameterized linear differential equations with regular singularities
\newblock Proc. of the Amer. Math. Soc. { 141},  605-617 (2011). 

\bibitem{MS}
C.~Mitschi, M.~F.~Singer,
\newblock Projective Isomonodromy and Galois Groups,
\newblock Bull. London Math. Soc. 44 (5),   913-930 (2012).

\bibitem{MOS}
A.~Minchenko, A.~Ovchinnikov, M.~F.~Singer,
\newblock Unipotent differential algebraic groups as parameterized differential Galois groups
\newblock Journal of the Institute of Mathematics of Jussieu {13} (04),  671-700 (2014)

\bibitem{MOSS}
A.~Minchenko, A.~Ovchinnikov, M.~F.~Singer,
\newblock Reductive linear differential algebraic groups and the Galois groups of parameterized linear differential equations\newblock International Mathematics Research Notices (2013), to appear.

\bibitem{morley}
M.~D.~Morley,
\newblock Categoricity in power,
\newblock Trans. Amer. Math. Soc. 114,  514-538, 1965.

\bibitem{ohyama}
Y.~Ohyama,
\newblock Quadratic equations and monodromy evolving deformations
\newblock {\it arXiv} :0709.4587v1 [math.CA]  28 Sep 2007.

\bibitem{ohyama2}
Y.~Ohyama,
\newblock Monodromy evolving deformations and Halphen's equation
\newblock in Groups and Symmetries, {\it CRM Proc. Lecture Notes} 47, Amer.Math.Soc., Providence, RI, 2009. 



\bibitem{Pa78}
R.~S.~ Palais,
\newblock Some analogues of Hartogs theorem in an algebraic setting,
\newblock {\em  Amer. J. Math.} 100(2), 387--405, 1978.


\bibitem{PuSi2003}
M.~van der Put and M.~F.~Singer,
\newblock Galois Theory of Linear Differential Equations,
\newblock {\it Grundlehren der mathematischen Wissenshaften} 328, Springer-Verlag, 2003.

\bibitem{Ro59}
A.~ Robinson,
\newblock On the concept of differentially closed field,
\newblock {\em  Bull. Res. Council  Israel Sect. F} 8,  113-118, 1959.

\bibitem{Ro63}
A.~ Robinson,
\newblock Introduction to Model Theory and the Metamathematics of Algebra,
\newblock {\em  North Holland Publ., Amsterdam, }  1963

\bibitem{sacks72}
G.~E.~Sacks,
\newblock The  differential closure of a differential field,
\newblock {\em  Bull. Amer.Math. Soc.} 78 (5),  629-634, 1972.

\bibitem{schaefke}
R.~Sch\"afke,
\newblock Formal fundamental solutions of irregular singular differential equations depending on parameters,
\newblock { J. Dynam. Control Systems } 7 (4), 501--533, 2001.

\bibitem{schlesinger}
L.~Schlesinger,
\newblock {Handbuch der Theorie der Linearen Differentialgleichungen},
\newblock Teubner, Leipzig, 1887.

\bibitem{sei58}
A.~Seidenberg,
\newblock Abstract differential algebra and the analytic case,
\newblock { Proc. Amer. Math. Soc.} 9,  159-164, 1958.

\bibitem{sei69}
A.~Seidenberg,
\newblock Abstract differential algebra and the analytic case {II}.
\newblock { Proc. Amer. Math. Soc.} 23,  689691, 1969.

\bibitem{shelah73}
S.~Shelah,
\newblock Differentially closed fields,
\newblock Israel. J. Math. 25,  314-328, 1976.

\bibitem{sin11}
M.~F.~Singer,
\newblock Linear algebraic groups as parameterized Picard-Vessiot Galois groups,
\newblock Preprint, 2011.

\bibitem{Sibuya}
Y.~Sibuya,
\newblock Linear Differential Equations in the Complex Domain: Problems of Analytic Continuation,
\newblock { Translations of Mathematical Monographs, Volume 82}
\newblock American Mathematical Society, 1990

\bibitem{tretkoff79}
C.~Tretkoff and M.~Tretkoff,
\newblock Solution of the Inverse Problem in Differential {Galois} Theory in the Classical Case,
\newblock Amer. J. Math. 101, 1327-1332, 1979.

\bibitem{wood}
C.~Wood,
\newblock The model theory of differential fields revisited,
\newblock Israel Journal of Mathematics 25,  1976.

\bibitem{Zola}
H.~Zoladek,
\newblock The Monodromy Group,
\newblock Monografie matematyszne, Inst. Mat. PAN,  67, New Series, Birkh\"auser, 2006.

\end{thebibliography}

\end{document}